\documentclass[12pt]{amsart}
\usepackage{amsfonts}

\usepackage{amsxtra}
\usepackage{latexsym}
\usepackage{amssymb}
\usepackage{amsthm}
\usepackage{newlfont}
\usepackage{verbatim}
\usepackage{amsmath}
\usepackage[usenames,dvipsnames]{xcolor}




\newtheorem{thm}{Theorem}[section]
\newtheorem{lem}[thm]{Lemma}
\newtheorem{prop}[thm]{Proposition}
\newtheorem{rmk}{Remark}[section]
\newtheorem{cor}[thm]{Corollary}


\newcommand{\qq}{\quad\quad}

\newcommand{\nf}{\infty}

\newcommand{\al}{\alpha}

\newcommand{\ga}{\gamma}
\newcommand{\Ga}{\Gamma}
\newcommand{\de}{\delta}

\newcommand{\ep}{\epsilon}

\newcommand{\tht}{\theta}

\newcommand{\la}{\lambda}

\newcommand{\si}{\sigma}

\newcommand{\vp}{\varphi}
\newcommand{\om}{\omega}


\newcommand{\rn}{\mathbb R^n}





\newcommand{\wh}{\widehat}


\newcommand{\p}{\partial}

\newcommand{\f}{\frac}

\newcommand{\tf}{\tfrac}



\newcounter{question}
\newcommand{\qt}{%
        \stepcounter{question}%
        \thequestion}
\newcommand{\bq}{\fbox{Q\qt}\ }

\newcommand{\wtd}{\widetilde}
\newcommand{\bbr}{\mathbb R}

\begin{document}

\title[Bilinear Maximal Bochner-Riesz]
{On Bilinear Maximal Bochner-Riesz Operators}

\author{Danqing He}
\address{Department of Mathematics, 
Sun Yat-sen (Zhongshan) University, 
Guangzhou, 510275, 
P.R. China}

\email{hedanqing35@gmail.com}


\subjclass[2010]{42B05, 42B15, 42B25.}
\thanks{{\it Keywords and phases:} Bilinear multipliers, bilinear maximal Bochner-Riesz operators,
Fourier series, wavelets, multilinear operators}

\keywords{}

\begin{abstract}
We prove that the bilinear maximal Bochner-Riesz operator
$T_*^\la$ is bounded from $L^{p_1}(\rn)\times L^{p_2}(\rn)$ to 
$L^p(\rn)$ for appropriate $(p_1,p_2,p)$ when $\la>(4n+3)/5$. 
\end{abstract}

\maketitle



 
 \section{Introduction}

 
 Let $\mathbb T^n$ be the torus $[0,1]^n$.
 For a function $f$ in $L^1(\mathbb T^n)$, its Fourier coefficient $a_k$ for $k\in\mathbb Z^n$ 
 is defined by
\begin{equation}\label{FS1}
 a_k=\int_{\mathbb T^n}f(x)e^{-2\pi ik\cdot x}dx.
\end{equation}
 The series $\sum_{k\in\mathbb Z^n}a_ke^{2\pi ik\cdot x}$ is called the Fourier series related to $f$.
 
The pointwise convergence of the Fourier series \eqref{FS1} has been a central problem of Fourier analysis. Carleson's 
celebrated work \cite{Carleson1966},   answering  Lusin's conjecture,   shows 
that the pointwise convergence is valid a.e. for   $f$ in $L^2(\mathbb T)$. This 
was generalized later by Hunt \cite{Hunt1968} for any $f$ in $L^p(\mathbb T)$ with $1<p\le \nf$.
These pointwise convergence results were   reproved by Fefferman \cite{Fefferman1973}, and Lacey and Thiele \cite{Lacey2000}.

For higher dimensions the Fourier series could naturally be interpreted as the limit
of the spherical partial sum  $\sum_{|k|\le N}a_ke^{2\pi ikx}$. Unfortunately Fefferman's counterexample \cite{Fefferman1971} indicates
 that the pointwise convergence fails for such sums  when $n\ge2$ and $p\neq 2$. 
 For this reason, it is   natural to consider  smoother versions of spherical sums,  known as
the Bochner-Riesz means, for which several analogous pointwise convergence results have been obtained; see, for instance Carbery \cite{Carbery1983}, Christ \cite{Christ1985}, Carbery, Rubio~de Francia  and   Vega~\cite{Carbery1988}, and Tao
\cite{Tao2002}.

We can also consider analogous bilinear questions. In  
the bilinear theory, developed in the past decades, we  study the restriction of the  output of 
linear   operators on the diagonal, when the input is of tensor product form.
 In our case, we ask what type of convergence can we obtain for the operator
\begin{equation}\label{av}
A^\la_{t}(f,g)(x)
=\int_{\mathbb R^n}\int_{\mathbb R^n}\wh f(\xi)\wh g(\eta)\big(1-(|t\xi |^2+|t \eta |^2)\big)_+^{\la}e^{2\pi i x\cdot (\xi+\eta)}d\xi d\eta,
\end{equation}
which can be regarded as $B_{1/t}^\la(f\otimes g)(x,x)$ with $B^\la_{1/t}$ the Bochner-Riesz operator
on $\bbr^{2n}$ and $x\in\rn$. For test functions  we should have
$A^\la_t(f,g)\to fg$ as $t\to0$ in   
the $L^p$ or pointwise sense.  Grafakos and Li 
\cite{Grafakos2006a} and Bernicot Grafakos, Song and Yan~\cite{Bernicot2015a}
have proved some partial positive results
 for $\la=0$ and $\la>0$ respectively
concerning the $L^p$ convergence. 

In this paper, we are concerned with the pointwise 
convergence of the means \eqref{av}, in particular with the 
boundedness of the bilinear maximal Bochner-Riesz 
operator, which of course implies the boundedness of the
bilinear Bochner-Riesz operators. 
The bilinear 
maximal Bochner-Riesz 
 operator  for $\la>0$
 is defined as
\begin{equation}\label{bBR}
T^{\la}_*(f,g)(x)=\sup_{t>0}
\Big|\int_{\mathbb R^n}\int_{\mathbb R^n}m(t\xi,t\eta)\wh f(\xi)\wh g(\eta)e^{2\pi i x\cdot (\xi+\eta)}d\xi d\eta\Big|,
\end{equation}
where $m({\xi,\eta})=m^{\la}(\xi,\eta)=(1-(|\xi |^2+|\eta |^2))_+^{\la}$,
which is equal to $(1-(|\xi |^2+|\eta |^2))^{\la}$ when $|(\xi,\eta)|\le 1$ and $0$ when
$|(\xi,\eta)|> 1$.
For simplicity we   occasionally  denote  $T^{\la}_*$ by   $T_*$ 
when there is no confusion. $T_*$ is a natural 
generalization of the (linear) maximal Bochner-Riesz  operator from which it naturally 
inherits its name.

Our main theorem is as follows.
\begin{thm}\label{06151}
When $\la>(4n+3)/5$, $T_*^\la$
is bounded from $L^{p_1}\times L^{p_2}$
to $L^p$ when
$p>(2n-11)/(10\la-6n-17)$,
$p_1,\ p_2>(4n-22)/(10\la-6n-17)$
and $1/p=1/p_1+1/p_2$.

\end{thm}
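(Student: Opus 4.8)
The plan is to decompose $T_*^\la$ dyadically in the frequency variable and exploit the connection with the linear Bochner-Riesz operator $B^\la$ on $\bbr^{2n}$ restricted to the diagonal. Write $m^\la(\xi,\eta)=\sum_{j\ge 0}\psi_j(\xi,\eta)$, where $\psi_0$ is supported in $|(\xi,\eta)|\le 1/2$ and each $\psi_j$ for $j\ge 1$ is supported in an annulus $1-2^{-j+1}\le |(\xi,\eta)|\le 1-2^{-j-1}$ with $\|\partial^\alpha\psi_j\|_\infty\lesssim 2^{j(|\alpha|-\la)}$. The piece coming from $\psi_0$ is easy: its multiplier is smooth and compactly supported, so the associated maximal operator is controlled by the bilinear Hardy-Littlewood maximal function and is bounded on the full expected range of exponents. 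The main work is to estimate the maximal operator $T_*^j$ associated with $\psi_j(t\xi,t\eta)$, proving a bound of the form $\|T_*^j(f,g)\|_{L^p}\lesssim 2^{-j\delta}\|f\|_{L^{p_1}}\|g\|_{L^{p_2}}$ with $\delta>0$ for the stated exponents, and then summing in $j$.

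For a fixed $j$, I would first linearize the supremum by introducing a measurable function $t(x)$ and then use a Sobolev-embedding/square-function trick in $t$ (writing $\sup_t |F(t)| \lesssim (\int |F(t)|^2 \tfrac{dt}{t})^{1/2} + (\int |F(t)|\,|F'(t)|\tfrac{dt}{t})^{1/2}$-type inequality) to replace the maximal operator by $L^2(dt/t)$-averaged square functions of the Littlewood-Paley pieces $\psi_j(t\xi,t\eta)$ and of $t\partial_t$ applied to them; the latter again has symbol of the same type with an extra factor $2^j$, which is harmless since each factor of $2^j$ costs only a fixed power and we have room from $\la>(4n+3)/5$. This reduces matters to a single-scale (non-maximal) bilinear multiplier estimate: bound the bilinear operator with symbol $\psi_j$, uniformly in the dilation, with the quantitative gain $2^{-j\delta}$. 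Here I would combine two ingredients: (i) the trivial $L^2\times L^2\to L^1$ (or $L^\infty\times L^\infty\to L^\infty$) bound, where the annular symbol $\psi_j$ together with the curvature of the sphere $|(\xi,\eta)|=1$ in $\bbr^{2n}$ gives decay via stationary phase / the decay of the Fourier transform of the surface measure; and (ii) interpolation with crude bounds obtained from Young's inequality for the kernel, whose $L^1$ norm grows like $2^{j((2n)/2 - \la + \text{something})}$. The exponent $(4n+3)/5$ and the strange-looking thresholds on $p,p_1,p_2$ in the statement should emerge precisely from optimizing this interpolation, i.e. from balancing the $2^j$-losses in the $L^2$-type estimate (which sees the $2n$-dimensional sphere) against those in the off-diagonal $L^p$ estimates.

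The main obstacle I expect is step (i) at the endpoint: obtaining a single-scale bilinear estimate for the annular multiplier $\psi_j$ with the sharp power of $2^{-j}$ on a range of $(p_1,p_2,p)$ wide enough that, after the Sobolev-in-$t$ reduction has eaten some of the margin, the surviving range is nonempty. Unlike the linear Bochner-Riesz problem, here one cannot simply quote $L^2$ boundedness of the sphere multiplier; one must track how the bilinear structure (restriction to the diagonal $x=x$ in $\bbr^{2n}$) interacts with the kernel bounds, and the honest tool is decay estimates for oscillatory integrals with the phase $(\xi+\eta)\cdot x$ restricted to the sphere $|\xi|^2+|\eta|^2=1$ — a nondegenerate hypersurface, so one does get the expected $(2n-1)/2$-order decay, but converting this into $L^{p_1}\times L^{p_2}\to L^p$ bounds with explicit constants is delicate. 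A secondary technical point is justifying the linearization and the measurability of $t(x)$, and ensuring all constants are uniform in the linearizing function; this is standard but must be done carefully. Once the single-scale estimate with gain is in hand, summing the geometric series in $j$ and adding back the $\psi_0$ piece completes the proof.
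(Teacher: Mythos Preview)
The paper's proof of Theorem~\ref{06151} follows a different route than yours. Rather than interpolating in $(p_1,p_2,p)$ for each dyadic piece, the paper first linearizes $T_*^\la$ via a measurable stopping-time function $R(x)$ with finitely many values to obtain a genuinely bilinear (not merely bisublinear) family $T_R^\la$ that is analytic in $\la$, and then applies Stein-type complex interpolation \emph{in the parameter $\la$} between two endpoints: the $L^2\times L^2\to L^1$ bound for $\text{Re}\,\la>(4n+3)/5$ (Theorem~\ref{Main}) and the full range $L^{q_1}\times L^{q_2}\to L^q$, $q_1,q_2>1$, for $\text{Re}\,\la>n-1/2$ (Proposition~\ref{Uni}, the trivial kernel bound). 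The stated thresholds on $p,p_1,p_2$ come precisely from this analytic interpolation in $\la$, not from balancing $2^j$-losses against Young-inequality kernel bounds as you suggest.

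More seriously, your step~(i) is a genuine gap. You describe the single-scale $L^2\times L^2\to L^1$ bound for the annular piece $\psi_j$ as coming from ``stationary phase / decay of the Fourier transform of the surface measure,'' but this is exactly the heart of the paper and it is \emph{not} obtained by oscillatory-integral methods. The exponent $(4n+3)/5$ arises instead from a compactly supported wavelet decomposition (from \cite{Grafakos2015}) of the rescaled multiplier $M_j(\xi,\eta)=m_j(2^{-j}\xi,2^{-j}\eta)$, together with a splitting into a diagonal piece $M_j^1$ and off-diagonal pieces $M_j^2,M_j^3$. On the off-diagonal pieces the square-function-in-$t$ trick you describe does apply and gives the much better bound $2^{-j\la}$ (Lemma~\ref{gfn}, Corollary~\ref{odj}); but on the diagonal piece the paper does not use a square function at all---it bounds $T_j^1$ by $\int_0^\infty|\tilde B_{j,s}^1(f,g)|\,\tfrac{ds}{s}$ and then runs a level-set counting argument on the wavelet coefficients (Proposition~\ref{BL}) that produces a bound of the form $\|M_j\|_{L^2}^{4/5}\|a\|_\infty^{1/5}\cdot 2^{\ga(n+1)}2^j$. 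It is precisely this $4/5$--$1/5$ split that yields the threshold $(4n+3)/5$. A stationary-phase or kernel-decay approach would not recover this exponent, and without it your interpolation scheme cannot reproduce the numerical thresholds in the theorem.
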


To prove this theorem, we need to interpolate between 
the positive result for $\la$ above the critical index (Proposition \ref{Uni})
and the $L^2\times L^2\to L^1$ boundedness (Theorem \ref{Main}) for $\la$ down to
$(4n+3)/5$. The former  of these results is standard but the latter is novel and constitutes the 
main contribution of this paper. 
Assuming these two results, we prove Theorem \ref{06151}
via the standard interpolation technique known from the  linear case; 
we     outline some ideas of this interpolation technique below and omit the  details,
which can be found in    \cite{Stein1956}
and \cite{Stein1971a}.

\begin{proof}[Proof of Theorem \ref{06151}]
First we   (bi)linearize this maximal operator $T_*^\la$. Let $\mathcal A$
be the class of nonnegative measurable functions on $\rn$ with finitely many distinct
values. For each function $R\in\mathcal A$, we can define the bilinear operator
$T_{R(x)}(f,g)=T^\la_{R(x)}(f,g)$
by the integral
$$
\int_{\rn}\int_{\rn}(1-R(x)^2(|\xi|^2+|\eta|^2))^{\la}_+\wh f(\xi)
\wh g(\eta)e^{2\pi ix\cdot \xi}d\xi d\eta.
$$
It is not hard to verify that for Schwartz functions $f$ and $g$ we have
$$
\sup_{R(x)\in\mathcal A}\|T_R(f,g)\|_p
=\|T_*(f,g)\|_p.
$$
Hence we know that $T_R$ is bounded for all indices we
have proved for $T_*$. And our claim will be established if we can prove the corresponding results 
for all $T_R$ with $R\in\mathcal A$ such that the constants involved are 
independent of $R$.

The advantage of $T_R^\la$ is that it is bilinear so that we are allowed to apply interpolation
results. The specific one we use here is Theorem 7.2.9 of \cite{Grafakos2014a}.
We can verify that $T^\la_R(f,g)$, as can be defined for all complex numbers $\la$, is 
analytic in $\la$ and actually admissible when $f$ and $g$ are simple functions.
We know that $T_R^{\la_0}$ is bounded from $L^2\times L^2$
to $L^1$ for $\la_0$ whose real part
is strictly great than $(4n+3)/5$, and 
$T_R^{\la_1}$ is bounded from $L^{q_1}\times L^{q_2}$
to $L^q$ with Re$\la_1>n-1/2$, $q_1,q_2>1$ and $1/q=1/q_1+1/q_2$.
The bounds for these two cases are of admissible growth  in the imaginary parts of
$\la_j$ for $j=0,1$, and they both are independent of $R\in\mathcal A$. Consequently 
we can interpolate between these two points using Theorem 7.2.9 of \cite{Grafakos2014a}
and obtain
that $\|T_R^\la(f,g)\|_{L^{p}}\le C\|f\|_{L^{p_1}}\|g\|_{L^{p_2}}$
for $p_1,\ p_2>(4n-22)/(10\la-6n-17)$
and $1/p=1/p_1+1/p_2$,
which   implies the claimed conclusion.
\end{proof}

 \begin{rmk}
We have all strong boundedness points for $T_*$ when $\la$ is larger than the critical index
$n-1/2$ by Proposition \ref{Uni}, which is greater than $(4n+3)/5$  when $n\ge 6$.
 \end{rmk}

 As a  corollary of Theorem \ref{06151}, we obtain the pointwise
 convergence, as $t\to0$, of the operator $A^\la_{t}(f,g)(x)$, which we denote
 by $A_{t}(f,g)(x)$ as well.
 
\begin{prop}
Suppose $\la>\min\{(4n+3)/5,\ n-1/2\}$, then
for $f\in L^{p_1}$ and $g\in L^{p_2}$ with $p_1,p_2$ as in Theorem 
\ref{06151}, 
we have
\begin{equation}\label{pc}
\lim_{t\to0}A_t(f,g)(x)\to f(x)g(x) \qq\text{ a.e..}
\end{equation}
\end{prop}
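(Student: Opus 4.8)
The plan is to deduce the almost-everywhere convergence in \eqref{pc} from the maximal bound of Theorem \ref{06151} by the classical density argument. First I would record that for a pair of Schwartz functions $(f,g)$ the means $A_t(f,g)$ converge to $fg$ uniformly as $t\to0$: indeed $A_t(f,g)(x)$ is the bilinear multiplier with symbol $m^\la(t\xi,t\eta)$ applied to $(f,g)$, and since $m^\la$ is continuous with $m^\la(0,0)=1$ and $\wh f\otimes\wh g\in L^1(\rn\times\rn)$, dominated convergence gives $A_t(f,g)(x)\to fg(x)$ for every $x$, with a bound uniform in $x$ coming from the rapid decay of $\wh f,\wh g$. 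Thus on the dense class $\mathcal S(\rn)\times\mathcal S(\rn)$ the conclusion holds trivially and even in the strong sense.

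Next I would pass to general $(f,g)\in L^{p_1}\times L^{p_2}$ by the standard Banach-principle mechanism adapted to the bilinear setting. Fix $\varepsilon>0$ and choose Schwartz functions $f_0,g_0$ with $\|f-f_0\|_{L^{p_1}}$ and $\|g-g_0\|_{L^{p_2}}$ small. Write, using bilinearity inside the supremum together with the subadditivity of $T_*^\la$,
\begin{equation*}
\limsup_{t\to0}\abs{A_t(f,g)-fg}\le T_*^\la(f-f_0,g)+T_*^\la(f_0,g-g_0)+\limsup_{t\to0}\abs{A_t(f_0,g_0)-f_0g_0}+\abs{fg-f_0g_0},
\end{equation*}
where the third term vanishes identically by the Schwartz case. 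Hence the ``oscillation function'' $\Omega(f,g):=\limsup_{t\to0}\abs{A_t(f,g)-fg}$ is controlled pointwise by $T_*^\la(f-f_0,g)+T_*^\la(f_0,g-g_0)+\abs{fg-f_0g_0}$.

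Then I would estimate each term in $L^p$-quasinorm (or in measure). By Theorem \ref{06151}, $\|T_*^\la(f-f_0,g)\|_{L^p}\le C\|f-f_0\|_{L^{p_1}}\|g\|_{L^{p_2}}$ and $\|T_*^\la(f_0,g-g_0)\|_{L^p}\le C\|f_0\|_{L^{p_1}}\|g-g_0\|_{L^{p_2}}$, while $\|fg-f_0g_0\|_{L^p}\le \|f-f_0\|_{L^{p_1}}\|g\|_{L^{p_2}}+\|f_0\|_{L^{p_1}}\|g-g_0\|_{L^{p_2}}$ by Hölder's inequality with $1/p=1/p_1+1/p_2$; all three are $\le C\varepsilon$ after the approximation, with $C$ depending only on $\|f\|_{L^{p_1}},\|g\|_{L^{p_2}}$ and the dimension. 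Therefore $\|\Omega(f,g)\|_{L^p}\le C\varepsilon$ for every $\varepsilon>0$, forcing $\Omega(f,g)=0$ almost everywhere, which is exactly \eqref{pc}. (If $p<1$ one works throughout with the $L^p$-quasinorm, which still controls distribution functions, or equivalently estimates $\abs{\{\Omega(f,g)>\alpha\}}$ directly via Chebyshev; the argument is unaffected.)

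The only genuinely delicate point is the hypothesis bookkeeping: one must check that the exponent range in Theorem \ref{06151} is nonempty precisely under $\la>\min\{(4n+3)/5,\,n-1/2\}$, so that admissible $(p_1,p_2)$ exist and the invoked maximal bound is available; when $\la>n-1/2$ one uses instead the strong bounds from Proposition \ref{Uni}. Everything else — the uniform convergence on Schwartz pairs and the linearization of the error — is routine, so I expect no substantive obstacle beyond this verification.
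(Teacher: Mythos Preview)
Your argument is correct and follows essentially the same density-plus-maximal-bound scheme as the paper's proof; the only cosmetic difference is that you work with the oscillation $\limsup_{t\to0}|A_t(f,g)-fg|$ and estimate it in the $L^p$-quasinorm, whereas the paper uses $\limsup_{\theta\to0}\limsup_{\varepsilon\to0}|A_\theta(f,g)-A_\varepsilon(f,g)|$ and estimates the measure of its superlevel sets directly. Both are standard variants of the same mechanism and your bookkeeping on the hypothesis range is also in line with the paper's intent.
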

The proof of this proposition is similar to the linear case, but we sketch it here for completeness.
\begin{proof}
It is easy to establish \eqref{pc} when both $f$ and $g$ are Schwartz functions.
To prove \eqref{pc} for $f\in L^{p_1}$ and $g\in L^{p_2}$ it suffices to 
show that for any given $\de>0$ the set
$E_{f,g}(\de)=\{y\in\mathbb R^n:O_{f,g}(y)>\de\}$ has measure $0$,
where
$$
O_{f,g}(y)=\limsup_{\tht\to0}\limsup_{\ep\to0}\big|A_{\tht}(f,g)(y)-A_\ep(f,g)(y)\big|.
$$
For any positive number $\eta$ smaller than $\|f\|_{L^{p_1}},\, \|g\|_{L^{p_2}}$, there exist  Schwartz functions $f_1=f-a$ and $g_1=g-b$
such that both $\|a\|_{L^{p_1}}$, and $\|b\|_{L^{p_2}}$ are bounded by $\eta$.
We observe that
$$|E_{f,g}(\de)|\le |E_{f_1,g_1}(\de/4)|+|E_{a,g_1}(\de/4)|+|E_{f_1,b}(\de/4)|+|E_{a,b}(\de/4)|.$$
Notice that
$|E_{f_1,g_1}(\de/4)|=0$ since \eqref{pc} is valid for $f_1,g_1$. 
To control the remaining three terms, we observe that, for instance, 
\begin{align*}
|E_{a,g_1}(\de/4)|\le&|\{y:\ 2T_*(a,g_1)(y)>\de/4\}|\\
\le& C\Big(\f{\|a\|_{L^{p_{1}}}\|g_1\|_{L^{p_2}}}\de\Big)^p\\
\le &C\Big(\f{\eta\|g\|_{L^{p_2}}}\de\Big)^p,
\end{align*}
where the last term goes to $0$ as $\eta\to0$ since $g$ and $\de$ are fixed.
\end{proof}

 The boundedness of $T^\la_*$ when $\la>n-\tf12$ is straightforward from its kernel, 
 whose proof we postpone
 to  next section, so what remains is  
that $T^\la_*$ is bounded from $L^{2}\times L^{2}$ to $L^1$ for the claimed range of $\la$,
which will be the main focus of the rest of this paper.
 
 \begin{thm}\label{Main}
 When $\la>(4n+3)/5$, for $T_*$ in \eqref{bBR}
 we have that
 $$
 \|T_*(f,g)\|_{L^1}\le C\|f\|_{L^{2}}\|g\|_{L^{2}}.
 $$
 \end{thm}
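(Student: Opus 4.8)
The plan is to factor the bilinear symbol into a one--parameter average of products of \emph{linear} Bochner--Riesz symbols, and thereby reduce the bilinear maximal estimate to the classical $L^2(\R^n)$ boundedness of the linear maximal Bochner--Riesz operator. The key elementary fact is the Beta--function identity: for $\la_1,\la_2>-1$ and $a,b\ge 0$,
$$
(1-a-b)_+^{\la_1+\la_2+1}=\frac{1}{B(\la_1+1,\la_2+1)}\int_0^1 s^{\la_1}(1-s)^{\la_2}\Big(1-\tfrac{a}{s}\Big)_+^{\la_1}\Big(1-\tfrac{b}{1-s}\Big)_+^{\la_2}\,ds,
$$
which one verifies by the substitution $s=a+(1-a-b)v$. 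Writing $\la=\la_1+\la_2+1$ and setting $a=t^2|\xi|^2$, $b=t^2|\eta|^2$, the two inner factors become $(1-|\xi|^2/(\sqrt{s}/t)^2)_+^{\la_1}$ and $(1-|\eta|^2/(\sqrt{1-s}/t)^2)_+^{\la_2}$, i.e.\ linear Bochner--Riesz symbols at radii $\sqrt{s}/t$ and $\sqrt{1-s}/t$. Hence, denoting by $B_R^{\beta}$ the linear Bochner--Riesz operator of index $\beta$ and radius $R$ on $\R^n$, for Schwartz $f,g$ and every $t>0$ one obtains, after an application of Fubini,
$$
\int_{\R^n}\!\int_{\R^n} m(t\xi,t\eta)\wh f(\xi)\wh g(\eta)e^{2\pi i x\cdot(\xi+\eta)}d\xi\,d\eta=\frac{1}{B(\la_1+1,\la_2+1)}\int_0^1 s^{\la_1}(1-s)^{\la_2}\,B_{\sqrt{s}/t}^{\la_1}f(x)\,B_{\sqrt{1-s}/t}^{\la_2}g(x)\,ds.
$$

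Next I would take absolute values inside the $s$--integral and, for each fixed $s$, bound the supremum of the product over $t$ by the product of the individual suprema, which amounts to letting the two radii range over $(0,\infty)$ independently:
$$
\sup_{t>0}\big|B_{\sqrt{s}/t}^{\la_1}f(x)\,B_{\sqrt{1-s}/t}^{\la_2}g(x)\big|\le B_*^{\la_1}f(x)\cdot B_*^{\la_2}g(x),
$$
where $B_*^{\beta}h=\sup_{R>0}|B_R^{\beta}h|$ is the linear maximal Bochner--Riesz operator. Since $\int_0^1 s^{\la_1}(1-s)^{\la_2}\,ds=B(\la_1+1,\la_2+1)$, this gives the pointwise domination $T_*^{\la}(f,g)(x)\le B_*^{\la_1}f(x)\,B_*^{\la_2}g(x)$ for \emph{every} splitting $\la=\la_1+\la_2+1$ with $\la_1,\la_2>-1$. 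Cauchy--Schwarz then yields $\|T_*^{\la}(f,g)\|_{L^1}\le\|B_*^{\la_1}f\|_{L^2}\,\|B_*^{\la_2}g\|_{L^2}$, and one concludes by invoking the $L^2(\R^n)$ boundedness of $B_*^{\beta}$, known for $\beta>1/2$ (Stein's square function plus Littlewood--Paley theory, the smooth low--frequency cut--off being controlled by the Hardy--Littlewood maximal function). Choosing $\la_1=\la_2=(\la-1)/2$, the hypothesis becomes simply $\la>2$, and since $(4n+3)/5\ge 2$ exactly when $n\ge 2$, this covers the range asserted; for $n=1$ one uses instead that $B_*^{\beta}$ is bounded on $L^2(\R)$ for every $\beta>0$ (Carleson--Hunt), so the argument already works for $\la>1$. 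The same scheme applies verbatim to complex $\la_1,\la_2$, the dependence on $\mathrm{Im}\,\la_j$ being polynomial and hence admissible for the analytic interpolation used in Theorem~\ref{06151}.

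The whole weight of this argument is that, once the symbol has been factored, nothing bilinear survives: the difficulty is entirely transferred to the linear maximal Bochner--Riesz operator on $\R^n$, and the exponent in the conclusion is inherited directly from whatever $L^2$ estimate one feeds in for $B_*^{\beta}$. Thus the only genuine obstacle I would expect is the linear input --- controlling $\sup_R|B_R^{\beta}h|$ on $L^2(\R^n)$ --- and the only way to push $\la$ below the threshold produced by the classical bound would be to improve the linear maximal estimate \emph{below the critical index} $n-\tfrac12$, which is where curvature of $S^{n-1}$, square--function, and Kakeya--type considerations necessarily enter.
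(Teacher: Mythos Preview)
Your argument is correct and takes a genuinely different route from the paper's. The Beta identity
\[
(1-a-b)_+^{\la_1+\la_2+1}=\frac{1}{B(\la_1+1,\la_2+1)}\int_0^1 s^{\la_1}(1-s)^{\la_2}\Big(1-\tfrac{a}{s}\Big)_+^{\la_1}\Big(1-\tfrac{b}{1-s}\Big)_+^{\la_2}\,ds
\]
does indeed factor the bilinear symbol into an average of tensor products of linear Bochner--Riesz symbols, and the pointwise domination $T_*^\la(f,g)\le B_*^{\la_1}f\cdot B_*^{\la_2}g$ follows exactly as you describe. With the elementary square-function input $B_*^\beta:L^2(\rn)\to L^2(\rn)$ for $\beta>1/2$ (your Littlewood--Paley remark is right: after subtracting a smooth bump controlled by $M$, the remaining square functions converge precisely when $2\beta-2>-1$), this yields $\la>2$, which already covers the theorem for $n\ge 2$; your $n=1$ remark is also fine, though for $\beta>0$ one does not actually need Carleson--Hunt. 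In fact, since $B_*^\beta$ is known to be bounded on $L^2(\rn)$ for every $\beta>0$ (Carbery--Rubio de Francia--Vega), your scheme gives the much stronger conclusion $\la>1$ in all dimensions.

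By contrast, the paper never factors the symbol: it performs a dyadic decomposition $m=\sum_j m_j$ into thin annuli, dilates each $m_j$ to unit width, and expands the result in a compactly supported wavelet basis; the pieces are then split into a ``diagonal'' part (handled by the combinatorial $\ell^2/\ell^\infty$ counting argument of Grafakos--He--Honz\'ik) and ``off-diagonal'' parts near the axes (handled by bilinear $g$-functions and the Hardy--Littlewood maximal function). This produces the decay $\|T_j\|_{L^2\times L^2\to L^1}\lesssim j\,2^{-j(\la-(4n+3)/5)}$, summable for $\la>(4n+3)/5$. Your approach is more elementary and yields a strictly better threshold for this particular theorem; the paper's machinery, on the other hand, is intrinsically bilinear and does not rely on the symbol being a function of $|\xi|^2+|\eta|^2$, so it is the kind of argument one would expect to adapt to bilinear multipliers that do not admit a product factorization.
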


Throughout this paper, we use the notation
$$\|T\|_{L^{p_1}\times L^{p_1}\to L^p}
=\sup_{\|f\|_{L^{p_1}}\le 1}\sup_{\|g\|_{L^{p_2}}\le 1}
\|T(f,g)\|_{L^p}.
$$
By a bilinear operator  $T$ related to the multiplier $\si$ we mean $T$ is defined by
$$T(f,g)(x)=\int_{\rn}\int_{\rn}\si(\xi,\eta)\wh f(\xi)\wh g(\eta)e^{2\pi ix\cdot (\xi+\eta)}d\xi d\eta$$
for all Schwartz functions $f$ and $g$.
 
\section{The Decompsitions}


Let us fix a nonnegative nonincreasing smooth function $\vp(s)$ on $\mathbb R$ such that $\vp(s)=1$ for $s\le 1/2$ and $\vp(s)=0$ for $s\ge1$. Define 
$\vp_j(s)=\vp(2^{j+1}(s+2^{-j}-1))$ for $j\ge 1$. 
Denote by $\psi_j(s)$ the function $\vp_1(s)$ when $j=0$ and
$\vp_{j+1}(s)-\vp_j(s)$ when $j\ge 1$. Notice that $\psi_0$ is supported in $(-\nf,3/4]$
and $\psi_j$ is supported in $[1-2^{-j},1-2^{-j-2}]$ for $j\ge1$.
Moreover $\sum_{j=0}^{\nf}\psi_j(s)=\chi_{(-\nf,1)}(s)$.

 We decompose the multiplier $m$  smoothly so that $m=\sum_{j\ge0}m_j$,
  where $m_j(\xi,\eta)=m(\xi,\eta)\psi_j(|(\xi,\eta)|) $ is supported in an annulus of the form
 $$
 \{(\xi,\eta)\in\mathbb R^{2n}: {1-2^{-j}} \le |(\xi,\eta)| \le   {1-2^{-j-2}} \}
 $$
 for $j\ge 1$ and $m_0$ is supported in a ball of radius $3/4$ centered at the
 origin. 
 
 Let 
$T_j(f,g)(x)=\sup_{t>0}
|\int_{\mathbb R^n}\int_{\mathbb R^n}\wh f(\xi)\wh g(\eta)m_j(t\xi,t\eta)e^{2\pi i x\cdot (\xi+\eta)}d\xi d\eta|,
$
then $T_*(f,g)(x)\le \sum_{j=0}^\nf T_j(f,g)(x)$.
We first present some trivial results on $T_*$ and $T_j$.
\begin{prop}\label{Uni} 
Assume $1<p_1,p_2\le\nf$, and 
$1/p=1/p_1+1/p_2$. Then for $\la>n-1/2$,
there exists a constant $C=C(p_1,p_2)$
such that
$\|T_*\|_{L^{p_1}\times L^{p_2}\to L^p}\le C$.
For any fixed $j$, there exists a constant $C_j$
such that
$\|T_j\|_{L^{p_1}\times L^{p_2}\to L^p}\le C_j$.

\end{prop}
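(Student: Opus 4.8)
The plan is to deduce both statements from the pointwise size of the convolution kernel, exactly as for the linear maximal Bochner--Riesz operator. The multiplier $m=m^\la$ is the classical Bochner--Riesz multiplier $(1-|\zeta|^2)_+^\la$ on $\R^{2n}$, whose kernel
\[
K^\la(w)=\int_{\R^{2n}}(1-|\zeta|^2)_+^\la e^{2\pi i w\cdot\zeta}\,d\zeta=c_\la\,|w|^{-(n+\la)}J_{n+\la}(2\pi|w|)
\]
satisfies, by the standard asymptotics $J_\nu(r)=O(r^{-1/2})$ as $r\to\infty$ and $J_\nu(r)=O(r^{\nu})$ as $r\to0$, the bound $|K^\la(w)|\le C\,(1+|w|)^{-(n+\la+\tf12)}$. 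Since $A^\la_t(f,g)(x)$ is $B^\la_{1/t}(f\otimes g)$ restricted to the diagonal and the inverse Fourier transform of $(\xi,\eta)\mapsto m(t\xi,t\eta)$ is $t^{-2n}K^\la(\cdot/t)$, for each fixed $t>0$ one has
\[
\Big|\int_{\rn}\int_{\rn}m(t\xi,t\eta)\wh f(\xi)\wh g(\eta)e^{2\pi ix\cdot(\xi+\eta)}d\xi d\eta\Big|\le \int_{\rn}\int_{\rn}t^{-2n}\Big|K^\la\Big(\tf{x-y}{t},\tf{x-z}{t}\Big)\Big|\,|f(y)|\,|g(z)|\,dy\,dz .
\]

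The next step is to split the decay of $K^\la$ evenly between the two $\rn$-variables. From the elementary inequality $(1+|w_1|)(1+|w_2|)\le(1+|(w_1,w_2)|)^2$ one gets
\[
|K^\la(w_1,w_2)|\le C\,\Phi(w_1)\,\Phi(w_2),\qquad \Phi(w):=(1+|w|)^{-\tf12(n+\la+\tf12)},
\]
where $\Phi$ is radially nonincreasing and lies in $L^1(\rn)$ \emph{precisely} when $\tf12(n+\la+\tf12)>n$, i.e.\ when $\la>n-\tf12$; this is the only place the hypothesis on $\la$ is used. Hence the double integral above is at most $C\,(\Phi_t*|f|)(x)\,(\Phi_t*|g|)(x)$ with $\Phi_t(w)=t^{-n}\Phi(w/t)$, and the classical domination $\sup_{t>0}(\Phi_t*|h|)(x)\le\|\Phi\|_{L^1}\,Mh(x)$ of dilated convolutions by the Hardy--Littlewood maximal operator yields, after taking the supremum over $t$,
\[
T_*^\la(f,g)(x)\le C\,Mf(x)\,Mg(x).
\]
H\"older's inequality with $1/p=1/p_1+1/p_2$ together with the $L^{p_i}$-boundedness of $M$ for $p_i>1$ (using $\|Mh\|_{L^\infty}\le\|h\|_{L^\infty}$ if some $p_i=\infty$) then give $\|T_*^\la(f,g)\|_{L^p}\le C\|f\|_{L^{p_1}}\|g\|_{L^{p_2}}$.

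For the pieces $T_j$ the same argument works and is even cheaper: on $\mathrm{supp}\,m_j$ the quantity $|(\xi,\eta)|$ stays bounded away from $1$, so $m_j$ is smooth and compactly supported on $\R^{2n}$, its kernel $K_j$ is therefore Schwartz, and $|K_j(w)|\le C_{j,N}(1+|w|)^{-N}$ for every $N$; taking $N=2n+2$ and splitting as before gives $T_j(f,g)(x)\le C_j\,Mf(x)\,Mg(x)$, whence $\|T_j\|_{L^{p_1}\times L^{p_2}\to L^p}\le C_j$. I do not expect a real obstacle here --- this is the ``trivial'' part announced in the text. The only points requiring care are recording the exact $r^{-1/2}$ gain in the Bessel asymptotics so that the symmetric split places $\tf12(n+\la+\tf12)>n$ units of polynomial decay in each $\rn$-variable (any unequal split with both exponents exceeding $n$ would do, and one exists if and only if $\la>n-\tf12$), and invoking the maximal-function majorization for the supremum over dilations rather than estimating each $t$ separately.
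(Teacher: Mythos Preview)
Your proposal is correct and follows essentially the same route as the paper: both arguments use the Bochner--Riesz kernel bound $|K^\la(y,z)|\le C(1+|(y,z)|)^{-(n+\la+1/2)}$, split this decay symmetrically into a product $\Phi(y)\Phi(z)$ with $\Phi(w)=(1+|w|)^{-(n+\la+1/2)/2}\in L^1(\rn)$ precisely when $\la>n-\tfrac12$, and then dominate by $Mf\cdot Mg$; the $T_j$ part is handled identically via the smoothness and compact support of $m_j$. Your write-up is slightly more explicit about the Bessel asymptotics and the splitting inequality, but there is no substantive difference.
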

\begin{proof}
Let us consider the kernel 
$K(y,z)=m^{\vee}(y,z)$ of $A_1$ defined in \eqref{av},  which satisfies that
$|K(y,z)|\le C(1+|y|+|z|)^{-(n+\la+1/2)}$ (see, for example, \cite{Grafakos2014b}),
hence for $\la>n-1/2$, we have
\begin{align*}
 |A_t(f,g)(x)| 
=&\bigg|\int_{\mathbb R^{2n}}t^{-2n}K(\f{x-y}t,\f{x-z}t)f(y)g(z)dydz\bigg|\\
\le &C(\vp_t*f)(x)(\vp_t*g)(x)\\
\le &CM(f)(x)M(g)(x),
\end{align*}
where $M$ is the Hardy-Littlewood maximal function, and $\vp_t(y)=t^{-n}\vp(y/t)$
with $\vp(y)=(1+|y|)^{-(n+\la+1/2)/2}$,
which is integrable when $\la >n-1/2$.
Then $T_*(f,g)(x)\le CM(f)(x)M(g)(x)$,
which implies that $\|T_*(f,g)\|_{L^p}\le C\|f\|_{L^{p_1}}\|g\|_{L^{p_2}}$ for
$1<p_1,p_2\le\nf$ with
$1/p=1/p_1+1/p_2$
in view of the boundedness of the Hardy-Littlewood maximal function.

We observe that each $m_j$ is smooth and compactly supported, hence for each fixed
$j$ a similar argument gives that $\|T_j\|_{L^{p_1}\times L^{p_2}\to L^p}\le C_j$.
 \end{proof}

With the aid of the preceding decomposition and the boundedness of
$T_j$, the study of the boundedness of
$T_*$ is reduced to locating the decay of $C_j$ in $j$; for the case $(p_1,p_2)=(2,2)$, this 
is contained in the following proposition.
 \begin{prop}\label{De}
 $T_j$
 satisfies that
 $$\|T_j\|_{L^2\times L^2\to L^1}\le C_nj2^{-j(\la-\f{4n+3}5)}.$$
 
 \end{prop}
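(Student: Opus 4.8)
The plan is to analyze each piece $T_j$ separately and to extract a power of $2^{-j}$ that beats the growth coming from the $L^2\times L^2$ estimate. First I would rescale so that the multiplier $m_j(t\xi,t\eta)$ is reduced to $m_j(\xi,\eta)$; by dilation invariance of the $L^2\times L^2\to L^1$ norm, it suffices to control the sup over $t>0$ of the bilinear operators with symbol $m_j(t\cdot,t\cdot)$. The symbol $m_j$ lives on the annulus $1-2^{-j}\le|(\xi,\eta)|\le 1-2^{-j-2}$ of the sphere in $\R^{2n}$, of thickness $\sim 2^{-j}$, and on it $m=m^\la$ has size $\sim 2^{-j\la}$ together with derivative bounds $|\partial^\alpha m_j|\lesssim 2^{j(|\alpha|-\la)}$. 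The factor $2^{-j\la}$ is the source of the exponential decay; the issue is how much of it survives after one pays for (a) the maximal operator in $t$, and (b) the loss incurred in passing from an $L^2$-based inequality in $\R^{2n}$ to the diagonal restriction, which is the bilinear phenomenon.

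The key steps, in order, are: \textbf{(i)} linearize the supremum by introducing a measurable function $t=t(x)$ and writing $T_j(f,g)(x)=|S_{t(x)}(f,g)(x)|$, then pass to a square-function/$g$-function control $\sup_t|S_t(f,g)|\le |S_{t_0}(f,g)|+\big(\int_0^\infty|t\,\partial_t S_t(f,g)|^2\,\tfrac{dt}t\big)^{1/2}$ or a discretized analogue over dyadic $t$; this converts the maximal estimate into two fixed-multiplier bilinear estimates, one for $m_j$ itself and one for $t\partial_t m_j$, the latter costing only an extra factor comparable to $2^j$ times the thickness, i.e.\ $O(1)$. \textbf{(ii)} Estimate the resulting bilinear multiplier operator with symbol supported on the $2^{-j}$-annulus. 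Write $m_j$ as an average/superposition (via Fourier expansion in the thin direction) of the ``flat'' pieces obtained by freezing the radial variable, reducing matters to bilinear operators whose symbols are adapted to a single sphere of radius $\approx 1$ thickened by $2^{-j}$. \textbf{(iii)} Use the known sharp $L^2$ mapping properties for such thin bilinear multipliers — essentially the bilinear analogue of the fact that a smooth cutoff to an $O(\delta)$-neighborhood of the unit sphere in $\R^{2n}$ has operator norm controlled by an appropriate power of $\delta$ — restricting the $\R^{2n}$ output to the diagonal $\{(x,x)\}$; this restriction is responsible for the exponent $(4n+3)/5$ rather than $n-\tfrac12$, and is where the dimension $n$ and the number $5$ in the exponent enter. \textbf{(iv)} Sum the contributions of the $\sim 2^j$ frozen slices, producing one extra factor of $2^{-j\cdot(\text{something})}$ and the harmless linear factor $j$ in front (from, say, a logarithmic number of scales in the $t$-integration or a Cauchy--Schwarz over the slices), to land on the stated bound $C_n\, j\, 2^{-j(\la-(4n+3)/5)}$.

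The main obstacle is step (iii): obtaining the sharp $2^{-j}$-power for the diagonal-restricted bilinear operator attached to a $2^{-j}$-thickened sphere in $\R^{2n}$. One cannot simply use the linear Bochner--Riesz kernel bound of Proposition~\ref{Uni} (that only gives the critical index $n-\tfrac12$); instead one must exploit bilinear orthogonality/almost-orthogonality across the annular pieces together with an honest bilinear estimate — a Plancherel argument in $\R^{2n}$ combined with the trace-to-the-diagonal loss, or a $TT^*$-type computation showing the kernel of $S_jS_j^*$ has the required decay. Quantifying the diagonal-restriction loss optimally — balancing the $L^2$-gain $2^{-j\la}$ of the thin multiplier against the $L^\infty$-type loss $2^{jn}$ (or fractional version thereof) from reconstructing the diagonal value from the $2n$-dimensional function — is exactly what pins down the numerology $(4n+3)/5$, and getting the constant to be independent of the linearizing function $t(x)$ and of the simple-function truncation in $\mathcal A$ requires care. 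I expect steps (i), (ii), (iv) to be routine manipulations once (iii) is in hand.
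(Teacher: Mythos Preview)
Your outline correctly identifies that the heart of the matter is a fixed-$t$ bilinear estimate for a multiplier adapted to a $2^{-j}$-annulus on $S^{2n-1}$, and that the maximal-in-$t$ reduction is routine by comparison. But step~(iii) is a genuine gap, not merely ``the hard step'': you defer to ``known sharp $L^2$ mapping properties for thin bilinear multipliers'' and to a vague Plancherel-plus-trace or $TT^*$ mechanism, yet no such off-the-shelf result exists at the precision needed, and neither of your suggested mechanisms produces the exponent $(4n+3)/5$. That exponent is not a trace-restriction loss one can read off from Sobolev embedding or a kernel bound; in the paper it arises from a specific combinatorial argument on wavelet coefficients.

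Concretely, the paper does two things you are missing. First, it splits $m_j$ (after dilating to $M_j(\xi,\eta)=m_j(2^{-j}\xi,2^{-j}\eta)$, which makes all derivative bounds uniform, $\|\partial^\alpha M_j\|_\infty\le C2^{-j\la}$) into a \emph{diagonal} piece $M_j^1$ supported away from the $\xi$- and $\eta$-axes, and \emph{off-diagonal} pieces $M_j^2,M_j^3$ near the axes. The off-diagonal pieces are handled by the square-function trick you describe in step~(i), together with the observation that near an axis one of the two linear factors is controlled by $M(g)$ and the other is an honest $L^2$ operator supported where $|\xi|\in[2^j-1,2^j-\tfrac14]$; this gives the much better decay $2^{-j\la}$ (Corollary~\ref{odj}). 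Second, and this is the crux, the diagonal piece is expanded in a compactly supported product wavelet basis, $M_j^1=\sum_\gamma\sum_{k,l}a_{k,l}\,\omega_{1,k}\otimes\omega_{2,l}$, and one exploits simultaneously the $\ell^\infty$ bound $|a_{k,l}|\le C2^{-j\la}2^{-(M+n)\gamma}$ and the $\ell^2$ bound $\|a\|_2\le\|M_j\|_2\le C2^{jn-j(\la+1/2)}$. A level-set decomposition of the coefficients (the sets $U_r^1,U_r^2,U_r^3$ in Proposition~\ref{BL}, with threshold $N_1=(2^r\|a\|_2/\|a\|_\infty)^{2/5}$) yields the interpolated bound $\|a\|_2^{4/5}\|a\|_\infty^{1/5}\cdot 2^{\gamma(n+1)}2^j$, and it is precisely this $4/5$--$1/5$ split that produces $(4n+3)/5$. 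The factor $j$ comes from the $ds/s$ integral over $s\in[C2^{-\gamma}/|\xi|,\,2^j/|\xi|]$ in Corollary~\ref{dj}, not from a count of slices. None of your proposed decompositions (Fourier expansion in the thin direction, freezing the radial variable) gives access to this $\ell^2$--$\ell^\infty$ interpolation, so as written the plan does not reach the stated exponent.
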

 
 Let us prove Theorem \ref{Main} using Proposition \ref{De}.
 \begin{proof}[Proof of Theorem \ref{Main}]
 Since $T_*(f,g)(x)\le \sum_{j=0}^\nf T_j(f,g)(x)$, and the bound of $T_j$ has an exponential decay
 in $j$ when $\la>(4n+3)/5$ by 
 Proposition \ref{De},
 $\|T_*\|_{L^2\times L^2\to L^1}$ is finite.
 \end{proof}

 It suffices to consider the cases when $j$ is large.
 We  will use the wavelet decomposition of the multipliers as in \cite{Grafakos2015}. So we need 
 to introduce this decomposition due to \cite{Daubechies1988}, and the exact form we use here can be found
in \cite{Triebel2006}.


\begin{lem}[\cite{Triebel2006}]\label{wave}
For any fixed $k\in \mathbb N$ there exist real compactly supported functions $\psi_F,\psi_M\in \mathcal C^k(\mathbb R)$, 
which
satisfy that $\|\psi_F\|_{L^2(\mathbb R)}=\|\psi_M\|_{L^2(\mathbb R)}=1$
and $\int_{\mathbb R}x^{\al}\psi_M(x)dx=0$ for $0\le\al\le k$,
such that,  if $\Psi^G$ is defined by 
$$
\Psi^{G}(\vec x\,)=\psi_{G_1}(x_1)\cdots \psi_{G_{2n}}(x_{2n}) 
$$
for   $G=(G_1,\dots, G_{2n})$ in the set     
$$
 \mathcal I :=\Big\{ (G_1,\dots, G_{2n}):\,\, G_i \in \{F,M\}\Big\}  \, , 
$$
then the  family of 
functions 
$$
\bigcup_{\vec \mu \in  \mathbb Z^{2n}}\bigg[  \Big\{   \Psi^{(F,\dots, F)} (\vec  x-\vec \mu  )  \Big\} \cup \bigcup_{\la=0}^\nf
\Big\{  2^{\la n}\Psi^{G} (2^{\la}\vec x-\vec \mu):\,\, G\in \mathcal I\setminus \{(F,\dots , F)\}  \Big\}  
  \bigg]
$$
forms an orthonormal basis of $L^2(\mathbb R^{2n})$, where $\vec x= (x_1, \dots , x_{2n})$.  
\end{lem}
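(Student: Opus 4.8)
The plan is to deduce the statement from the classical Daubechies construction together with the standard tensor-product (multiresolution) passage from one variable to $2n$ variables; since the assertion is precisely the form recorded in \cite{Triebel2006}, the argument below is only a sketch of the ingredients, and for the full details one may simply refer to \cite{Daubechies1988} and \cite{Triebel2006}.

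First I would recall the one-dimensional building blocks. For each prescribed $k\in\mathbb{N}$, Daubechies' construction \cite{Daubechies1988} produces a compactly supported scaling function $\psi_F\in\mathcal{C}^k(\mathbb{R})$ and a compactly supported wavelet $\psi_M\in\mathcal{C}^k(\mathbb{R})$ satisfying the usual refinement relations, with $\|\psi_F\|_{L^2(\mathbb{R})}=\|\psi_M\|_{L^2(\mathbb{R})}=1$ and $\int_{\mathbb{R}}x^{\al}\psi_M(x)\,dx=0$ for $0\le\al\le k$. Setting $V_j=\overline{\mathrm{span}}\{2^{j/2}\psi_F(2^j\cdot-\mu):\mu\in\mathbb{Z}\}$ and $W_j=\overline{\mathrm{span}}\{2^{j/2}\psi_M(2^j\cdot-\mu):\mu\in\mathbb{Z}\}$, one has the multiresolution structure $V_j\subset V_{j+1}$, $\bigcap_j V_j=\{0\}$, $\overline{\bigcup_j V_j}=L^2(\mathbb{R})$, together with the orthogonal splitting $V_{j+1}=V_j\oplus W_j$; iterating from $j=0$ yields $L^2(\mathbb{R})=V_0\oplus\bigoplus_{j\ge0}W_j$, with the indicated orthonormal bases of $V_0$ and of each $W_j$.

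Next I would pass to $\mathbb{R}^{2n}$ by tensoring. Writing $L^2(\mathbb{R}^{2n})=L^2(\mathbb{R})\,\widehat\otimes\cdots\widehat\otimes\,L^2(\mathbb{R})$ ($2n$ factors) and putting $\mathbf{V}_j=V_j\otimes\cdots\otimes V_j$, one distributes the one-dimensional splitting over the coordinates:
\[
\mathbf{V}_{j+1}=\bigotimes_{i=1}^{2n}\bigl(V_j\oplus W_j\bigr)=\bigoplus_{G\in\{F,M\}^{2n}}\ \bigotimes_{i=1}^{2n}X_i^{G},
\]
where $X_i^{G}=V_j$ if $G_i=F$ and $X_i^{G}=W_j$ if $G_i=M$. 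The term $G=(F,\dots,F)$ is exactly $\mathbf{V}_j$, and writing $\mathbf{W}_j$ for the direct sum over the remaining $G\in\mathcal{I}\setminus\{(F,\dots,F)\}$ we get $\mathbf{V}_{j+1}=\mathbf{V}_j\oplus\mathbf{W}_j$. An orthonormal basis of $\bigotimes_i X_i^{G}$ is given by $\{2^{jn}\Psi^{G}(2^j\vec x-\vec\mu):\vec\mu\in\mathbb{Z}^{2n}\}$ (using $2^{(2n)j/2}=2^{jn}$), so collecting these over all $G\neq(F,\dots,F)$ produces an orthonormal basis of $\mathbf{W}_j$.

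Finally, iterating $\mathbf{V}_{j+1}=\mathbf{V}_j\oplus\mathbf{W}_j$ from $j=0$ and using $\bigcap_j\mathbf{V}_j=\{0\}$ and $\overline{\bigcup_j\mathbf{V}_j}=L^2(\mathbb{R}^{2n})$, I would conclude that $L^2(\mathbb{R}^{2n})=\mathbf{V}_0\oplus\bigoplus_{j\ge0}\mathbf{W}_j$. Combining the orthonormal basis $\{\Psi^{(F,\dots,F)}(\vec x-\vec\mu)\}_{\vec\mu\in\mathbb{Z}^{2n}}$ of $\mathbf{V}_0$ with the orthonormal bases of the spaces $\mathbf{W}_j$ described above gives exactly the displayed family, with the dilation index $j$ playing the role of $\la$ in the statement; the $\mathcal{C}^k$-smoothness and the vanishing-moment conditions are inherited coordinatewise from $\psi_F$ and $\psi_M$. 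The only point requiring genuine (though standard) care is this last completeness step — that $\bigcup_j\mathbf{V}_j$ is dense in $L^2(\mathbb{R}^{2n})$ — which is where the tensor-product argument needs a limiting argument rather than pure algebra; everything else is bookkeeping.
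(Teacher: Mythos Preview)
Your sketch is correct and follows the standard route (Daubechies' one-dimensional compactly supported $\mathcal C^k$ scaling function/wavelet pair, then the tensor-product multiresolution analysis to pass to $\mathbb R^{2n}$). The paper, however, gives no proof of this lemma at all: it is simply quoted from \cite{Triebel2006}, with the one-dimensional ingredients attributed to \cite{Daubechies1988}. So there is nothing to compare against beyond noting that your outline is precisely the argument one would find in those references.
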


A lemma concerning the decay of the coefficients related to the orthonormal basis in 
Lemma \ref{wave} is given below.
\begin{lem}[\cite{Grafakos2015}]\label{smooth}
 Suppose $\si(\xi,\eta)$ defined on $\mathbb R^{2n}$ satisfies that there exists a constant $C_M$ such that 
$\|\p^\al(\si(\xi,\eta))\|_{L^{\nf}}\le C_{M}$ for each multiindex  $|\al|\le M$,
where $M$ is the number of vanishing moments of $\psi_M$. 
Then for 
any 
nonnegative
integer $\ga \in \mathbb N_0=\{n\in\mathbb Z: n\ge 0\}$ we have 
\begin{equation}\label{887}
|\langle \Psi^{\ga,G}_{\vec \mu}, \si\rangle| \leq C C_M2^{-(M+n)\ga} \, .
\end{equation}
\end{lem}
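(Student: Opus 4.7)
The plan is to establish \eqref{887} by a standard wavelet coefficient estimate: exploit the vanishing moments of $\psi_M$ in whichever coordinate the index $G_i$ equals $M$, combined with the regularity of $\sigma$ via a Taylor expansion.

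First I would unfold the definition coming from Lemma \ref{wave}: $\Psi^{\gamma,G}_{\vec\mu}(\vec x) = 2^{\gamma n}\Psi^G(2^\gamma\vec x-\vec\mu)$, so after the change of variables $\vec y = 2^\gamma\vec x-\vec\mu$ the pairing becomes
$$
\langle \Psi^{\gamma,G}_{\vec\mu},\sigma\rangle
= 2^{-\gamma n}\int_{\R^{2n}}\Psi^G(\vec y)\,\sigma\bigl(2^{-\gamma}(\vec y+\vec\mu)\bigr)\,d\vec y.
$$
Writing $\wt\sigma(\vec y) := \sigma(2^{-\gamma}(\vec y+\vec\mu))$, the chain rule yields $\|\partial^\alpha\wt\sigma\|_{L^\infty}\le 2^{-\gamma|\alpha|}C_M$ for all $|\alpha|\le M$, and this estimate is uniform in $\vec\mu$ because the hypothesis on $\sigma$ is global.

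Next, since $G\in\mathcal I\setminus\{(F,\dots,F)\}$, there exists at least one coordinate $i_0$ with $G_{i_0}=M$, so that the factor $\psi_M(y_{i_0})$ in $\Psi^G(\vec y)$ carries $M$ vanishing moments. Writing the integral iteratively with $y_{i_0}$ innermost, I would Taylor expand $\wt\sigma$ in this single variable to order $M-1$ about a fixed point $y_{i_0}^0$ lying in the compact support of $\psi_M$. The polynomial part is annihilated by integration against $\psi_M(y_{i_0})$ by the vanishing-moment property, while the Lagrange remainder obeys
$$
|R_M(\vec y)| \le \tfrac{1}{M!}\|\partial_{y_{i_0}}^M\wt\sigma\|_{L^\infty}\,|y_{i_0}-y_{i_0}^0|^M \le C\,2^{-\gamma M}\,C_M
$$
uniformly on the compact support of $\Psi^G$. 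The remaining tensor factors $\psi_{G_i}(y_i)$ for $i\ne i_0$ are compactly supported $L^1$ functions, so the outer integrations contribute only an absolute constant. Combining the $2^{-\gamma n}$ scaling prefactor with the $2^{-\gamma M}$ gain from Taylor cancellation produces the claimed bound.

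There is no essential obstacle: the argument is routine bookkeeping of the $2^{-\gamma|\alpha|}$ factors produced by the chain rule, together with Taylor's theorem in one variable. The only structural point requiring a moment of attention is that the estimate is uniform in the translation $\vec\mu$, which is automatic since the hypothesis controls $\partial^\alpha\sigma$ in $L^\infty(\R^{2n})$ globally; and that one must choose a coordinate $i_0$ where $G_{i_0}=M$ exists, which is exactly guaranteed by the exclusion of $(F,\dots,F)$ in the lemma statement.
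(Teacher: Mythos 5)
Your proposal is correct and is essentially the argument the paper points to: the paper omits the proof, deferring to Appendix B.2 of \cite{Grafakos2014a} and to \cite{Grafakos2015}, and that argument is exactly your rescaling to $\wt\sigma$, the $2^{-\gamma|\alpha|}$ gain from the chain rule, and Taylor expansion against the vanishing moments of $\psi_M$ in a coordinate with $G_{i_0}=M$. The only case not covered by your choice of $i_0$ is $G=(F,\dots,F)$, which in the basis occurs only at $\gamma=0$, where the trivial bound $|\langle\Psi,\sigma\rangle|\le\|\Psi\|_{L^1}\|\sigma\|_{L^\infty}\le CC_M$ already gives \eqref{887}.
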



This lemma can be proved by applying Appendix B.2 in \cite{Grafakos2014a}, and we delete the  
details 
which can be found in \cite{Grafakos2015}.

We now go back to the multipliers and consider their wavelet decompositions.
For this purpose, and to apply Lemma \ref{wave} and Lemma \ref{smooth},
we should study kinds of norms of $m_j$.

\begin{lem}\label{om}
There exists a constant $C$ such that
$$
\|m_j\|_{L^2}\le C2^{-j(\la+1/2)},
$$
and for any multiindex $\al$,
$$
\|\p^{\al}m_j\|_{L^{\nf}}\le C2^{-j(\la-|\al|)}.
$$
\end{lem}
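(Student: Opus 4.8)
The plan is to estimate the two norms of $m_j(\xi,\eta)=m(\xi,\eta)\psi_j(|(\xi,\eta)|)$ directly from the explicit formula $m(\xi,\eta)=(1-(|\xi|^2+|\eta|^2))_+^\la$ together with the support and scaling properties of $\psi_j$. Writing $\rho=|(\xi,\eta)|$, the cutoff $\psi_j$ localizes $\rho$ to the thin annulus $[1-2^{-j},1-2^{-j-2}]$ (for $j\ge1$), on which $1-\rho^2=(1-\rho)(1+\rho)\sim 2^{-j}$. So on the support of $m_j$ we have $|m(\xi,\eta)|=(1-\rho^2)^\la\le C2^{-j\la}$, and the annulus in $\mathbb R^{2n}$ has volume comparable to $2^{-j}$ (thickness $2^{-j}$ times the area of the unit sphere $S^{2n-1}$, up to constants). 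The $L^2$ bound then follows from $\|m_j\|_{L^2}^2\le C\,2^{-2j\la}\cdot 2^{-j}$, i.e. $\|m_j\|_{L^2}\le C2^{-j(\la+1/2)}$.

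For the derivative bound, I would use the Leibniz rule to write $\p^\al m_j$ as a sum of terms of the form $(\p^\beta m)\,(\p^\gamma[\psi_j(|\cdot|)])$ with $\beta+\gamma=\al$. Each differentiation of $m=(1-\rho^2)_+^\la$ in a Cartesian variable brings down a factor involving $(1-\rho^2)^{\la-1}$ (and smooth bounded factors like $\xi_i$, $\eta_i$), so inductively $|\p^\beta m|\le C(1-\rho^2)^{\la-|\beta|}\le C2^{-j(\la-|\beta|)}$ on the relevant annulus, using $1-\rho^2\sim 2^{-j}$ and $\la-|\beta|$ possibly negative (the estimate still holds since $2^{-j}\le 1-\rho^2$ from below is the wrong direction — here one needs $1-\rho^2\ge c2^{-j}$, which is exactly the lower end of the annulus, so $(1-\rho^2)^{\la-|\beta|}\le (c2^{-j})^{\la-|\beta|}$ when $\la-|\beta|<0$ requires $1-\rho^2\ge c 2^{-j}$; when $\la-|\beta|\ge 0$ one uses the upper bound $1-\rho^2\le C2^{-j}$). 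Meanwhile each derivative of the cutoff $\psi_j(|(\xi,\eta)|)$ costs a factor $2^{j}$, since $\psi_j$ is $\psi$ rescaled so that its argument lives on a scale-$2^{-j}$ interval (precisely $\psi_j(s)=\vp_{j+1}(s)-\vp_j(s)$ with $\vp_j(s)=\vp(2^{j+1}(s+2^{-j}-1))$, so $\|\p^r\psi_j\|_{L^\infty}\le C2^{rj}$), and differentiating $|(\xi,\eta)|$ itself is harmless away from the origin. Thus $|\p^\gamma[\psi_j(|\cdot|)]|\le C2^{j|\gamma|}$, and multiplying gives $|\p^\beta m\,\p^\gamma[\psi_j]|\le C2^{-j(\la-|\beta|)}2^{j|\gamma|}=C2^{-j(\la-|\beta|-|\gamma|)}=C2^{-j(\la-|\al|)}$, as claimed.

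The one point that needs genuine care — and I expect it to be the main obstacle — is bookkeeping the powers of $(1-\rho^2)$ when $\la-|\beta|$ is negative: on the annulus we have both $c2^{-j}\le 1-\rho^2\le C2^{-j}$, so either sign of the exponent is fine, but one must be sure the lower bound $1-\rho^2\ge c2^{-j}$ genuinely holds on $\mathrm{supp}\,\psi_j$, which it does precisely because $\psi_j$ is supported in $[1-2^{-j},1-2^{-j-2}]$ away from $\rho=1$. I would also remark that $j=0$ is trivial since $m_0$ is smooth and compactly supported, so all norms are $O(1)=O(2^{-0\cdot(\la\pm|\al|)})$. Assembling these observations yields both displayed estimates; I would write the induction on $|\al|$ for the derivative bound as a short lemma-internal computation rather than spelling out every multi-index.

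\begin{proof}
We may assume $j\ge 1$, since $m_0$ is smooth with compact support and both bounds hold trivially with the implied constant absorbing the $j=0$ case. Write $\rho=\rho(\xi,\eta)=|(\xi,\eta)|$. By construction $\psi_j$ is supported in $[1-2^{-j},1-2^{-j-2}]$, so on the support of $m_j$ we have $1-2^{-j}\le \rho\le 1-2^{-j-2}<1$, whence
$$
c\,2^{-j}\le 1-\rho^2=(1-\rho)(1+\rho)\le C\,2^{-j}.
$$
In particular $|m(\xi,\eta)|=(1-\rho^2)^\la\le C2^{-j\la}$ there, and the support of $m_j$ is contained in the annulus $\{1-2^{-j}\le \rho\le 1-2^{-j-2}\}\subset\mathbb R^{2n}$, which has Lebesgue measure at most $C2^{-j}$. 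Therefore
$$
\|m_j\|_{L^2}^2=\int_{\mathbb R^{2n}}|m_j|^2\le C2^{-2j\la}\cdot C2^{-j}\le C2^{-j(2\la+1)},
$$
which gives $\|m_j\|_{L^2}\le C2^{-j(\la+1/2)}$.

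For the derivative bound, first note that a Cartesian partial derivative applied to $m=(1-\rho^2)_+^\la$ produces, away from $\rho=1$, a finite sum of terms of the form $c\,P(\xi,\eta)(1-\rho^2)^{\la-\ell}$ where $\ell$ is the number of derivatives applied and $P$ is a polynomial of degree at most $\ell$; hence, on any set where $\rho\le 1-2^{-j-2}$ one has
$$
|\p^\beta m(\xi,\eta)|\le C_\beta\,(1-\rho^2)^{\la-|\beta|}\le C_\beta\,2^{-j(\la-|\beta|)},
$$
using the two-sided bound on $1-\rho^2$ above to handle either sign of the exponent $\la-|\beta|$. Next, the cutoff: since $\psi_j(s)=\vp_{j+1}(s)-\vp_j(s)$ with $\vp_j(s)=\vp(2^{j+1}(s+2^{-j}-1))$, we have $\|\p^r\psi_j\|_{L^\infty(\mathbb R)}\le C_r2^{rj}$, and as $\rho$ is smooth with bounded derivatives of all orders on the region $\rho\ge 1/2$, the chain rule gives $|\p^\gamma[\psi_j(\rho(\xi,\eta))]|\le C_\gamma 2^{j|\gamma|}$ on the support of $m_j$. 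Finally, by the Leibniz rule,
$$
|\p^\al m_j|\le\sum_{\beta+\gamma=\al}\binom{\al}{\beta}|\p^\beta m|\,|\p^\gamma[\psi_j(\rho)]|\le C_\al\sum_{\beta+\gamma=\al}2^{-j(\la-|\beta|)}2^{j|\gamma|}=C_\al\,2^{-j(\la-|\al|)},
$$
since $|\beta|+|\gamma|=|\al|$ in each term. This proves $\|\p^\al m_j\|_{L^\infty}\le C2^{-j(\la-|\al|)}$.
\end{proof}
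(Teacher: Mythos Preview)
Your proof is correct and follows essentially the same approach as the paper's: the paper computes the $L^2$ norm via polar coordinates (yielding $\int_{1-2^{-j}}^{1-2^{-j-2}}(1-r^2)^{2\la}r^{2n-1}\,dr\le C2^{-2j\la}2^{-j}$) where you use the equivalent pointwise bound times the measure of the support, and both arguments handle the derivative estimate via the Leibniz rule with the same bounds $|\p^{\al_1}m|\le C2^{-j(\la-|\al_1|)}$ and $|\p^{\al_2}\psi_j(|\cdot|)|\le C2^{j|\al_2|}$. Your additional care about the sign of $\la-|\beta|$ and the two-sided bound $c2^{-j}\le 1-\rho^2\le C2^{-j}$ is a welcome clarification that the paper leaves implicit.
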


\begin{proof}
A change of variables using polar coordinates implies that
\begin{align*}
\|m_j\|_{L^2}=&\Big(\int_{\bbr^{2n}}|m_j(\xi,\eta)|^2d\xi d\eta\Big)^{1/2}\\
\le& C\big(\int_{1-2^{-j}}^{1-2^{-j-2}}(1-r^2)^{2\la}r^{2n-1}dr\big)^{1/2}\\
\le& C(2^{-2j\la}2^{-j})^{1/2}\\
=&C2^{-j(\la+1/2)}
\end{align*}

To estimate the $\al$-th derivatives, we use the Leibniz's rule to write
$$
\p^{\al}m_j(\xi,\eta)=\sum_{\al_1+\al_2=\al} C_{\al_1}\p^{\al_1}m(\xi,\eta)\p^{\al_2}\psi_j(|(\xi,\eta)|) .
$$
Noticing that 
 $|\p^{\al_1}m(\xi,\eta)|\le C 2^{-j(\la-|\al_1|)}$  and
$\p^{\al_2}\psi_j(|(\xi,\eta)|)\le C2^{j|\al_2|}$,   we  derive the bound
$\p^{\al}m_j(\xi,\eta)$ by $C2^{-j(\la-|\al|)}$.
\end{proof}

Unfortunately we need a large number of derivatives in our analysis, and 
the estimate we have on $\|\p^{\al}m_j\|$ is not suitable. 
However, we observe that the support of $m_j$ is very thin, which
is an advantage we should make use of. To realize this, we may dilate it to get a fixed width
so that we are at a good position to apply the wavelet decomposition implying
a good estimate, since, as we will see later, a uniform bound for derivatives plays an important
role in our theory.

Let us define $M_j(\xi,\eta)=m_j(2^{-j}\xi,2^{-j}\eta)$, which
is supported in the annulus $\{(\xi,\eta)\in\mathbb R^{2n}: {2^{j}-1} \le |(\xi,\eta)| \le   {2^{j}-1/4} \}$, whose width is $3/4$. Based on Lemma \ref{om},
we have the following corollary.
\begin{cor}\label{mm}
$M_j$ defined as above satisfies that
$$
\|M_j\|_{L^2}\le C2^{jn}2^{-j(\la+1/2)},
$$
$$
\|\p^{\al}M_j\|_{L^{\nf}}\le C2^{-j\la} \qq\text{for all multiindex }\al, 
$$
and
$$\nabla m_j(\xi,\eta)=2^j(\nabla M_j)(2^j\xi,2^j\eta).$$

\end{cor}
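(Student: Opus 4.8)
The plan is to deduce all three assertions from Lemma \ref{om} by a single dilation, carefully tracking how the factor $2^{-j}$ interacts with each norm. First I would record the support of $M_j$: since $m_j$ is supported in the annulus $\{1-2^{-j}\le|(\xi,\eta)|\le 1-2^{-j-2}\}$, the function $M_j(\xi,\eta)=m_j(2^{-j}\xi,2^{-j}\eta)$ is supported where $2^{-j}|(\xi,\eta)|$ lies in that same annulus, that is, in $\{2^{j}-1\le|(\xi,\eta)|\le 2^{j}-1/4\}$, an annulus of width $3/4$ independent of $j$, as asserted in the statement. For the $L^2$ bound I would change variables $(\xi,\eta)\mapsto(2^{-j}\xi,2^{-j}\eta)$, whose Jacobian contributes a factor $2^{2jn}$; this gives $\|M_j\|_{L^2}=2^{jn}\|m_j\|_{L^2}$, and the first estimate of Lemma \ref{om} then yields $\|M_j\|_{L^2}\le C2^{jn}2^{-j(\la+1/2)}$.

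For the derivative bounds the chain rule gives $\p^{\al}M_j(\xi,\eta)=2^{-j|\al|}(\p^{\al}m_j)(2^{-j}\xi,2^{-j}\eta)$, hence $\|\p^{\al}M_j\|_{L^{\nf}}=2^{-j|\al|}\|\p^{\al}m_j\|_{L^{\nf}}$. This is the one step where anything actually happens: the second estimate of Lemma \ref{om} costs a factor $2^{j|\al|}$, which is exactly cancelled by the $2^{-j|\al|}$ produced by differentiating the dilation, leaving the bound $\|\p^{\al}M_j\|_{L^{\nf}}\le C2^{-j\la}$ uniformly in $\al$. This cancellation is the whole point of rescaling $m_j$ to a width $\sim 1$: it converts the rapidly growing derivative bounds of $m_j$ into a single constant $C2^{-j\la}$, at the cost of the harmless $2^{jn}$ loss in the $L^2$ norm. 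Finally, differentiating $M_j(\xi,\eta)=m_j(2^{-j}\xi,2^{-j}\eta)$ gives $\nabla M_j(\xi,\eta)=2^{-j}(\nabla m_j)(2^{-j}\xi,2^{-j}\eta)$, and evaluating this at $(2^{j}\xi,2^{j}\eta)$ rearranges to $\nabla m_j(\xi,\eta)=2^{j}(\nabla M_j)(2^{j}\xi,2^{j}\eta)$.

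There is no genuine obstacle here: the content is entirely the bookkeeping of dilation factors, and the only thing one must be careful about is the sign of each exponent, so that the $2^{\pm j|\al|}$ factors cancel in the derivative estimate and the Jacobian appears with the correct power $2^{2jn}$ in the $L^2$ estimate.
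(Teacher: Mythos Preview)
Your proof is correct and follows exactly the same route as the paper: a change of variables for the $L^2$ norm, the chain rule combined with Lemma \ref{om} for the uniform derivative bound, and a direct differentiation for the gradient identity. The paper's own argument is essentially a terser version of what you wrote.
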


\begin{proof}
A simple change of variables implies that
\begin{align*}
\|M_j\|_{L^2}=&(\int_{\bbr^{2n}}|m_j(2^{-j}\xi,2^{-j}\eta)|^2d\xi d\eta)^{1/2}\\
=& 2^{jn}\|m_j\|_{L^2}\\
\le &C 2^{jn}2^{-j(\la+1/2)}.
\end{align*}

We control $\p^{\al}M_j$ by $|2^{-j|\al|}(\p^{\al}m_j)(2^{-j}\xi,2^{-j}\eta)|\le C
2^{-j|\al|-j(\la-|\al|)}= C2^{-j\la}$.

The verification of the last identity is straightforward.
\end{proof}

Since the new multiplier $M_j$ is still in $L^2$, we have a wavelet decomposition
using Lemma \ref{wave}, i.e.
\begin{equation}\label{emj}
M_j=\sum a_{\om}\om,
\end{equation}
where the summation is over all $\om=\Psi^{\ga,G}_{\vec \mu}$ in the orthonormal basis 
described in Lemma \ref{wave}, the order of cancellations of $\psi_M$ is $M=4n+6$, and
$a_\om=<M_j,\om>$. Concerning the size of $a_\om$, we have the following estimate,
which is a direct implication of Lemma \ref{smooth}
and Corollary \ref{mm}.

\begin{cor}
The coefficient $a_\om$ related to $\om$ with dilation $\ga$ is bounded by 
$C2^{-j\la}2^{-(M+n)\ga}$.

\end{cor}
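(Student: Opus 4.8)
The goal is to bound $a_\om=\langle M_j,\om\rangle$ for $\om=\Psi^{\ga,G}_{\vec\mu}$. The plan is to combine the two estimates already proven for $M_j$ in Corollary \ref{mm} with the decay statement of Lemma \ref{smooth}, being careful about which one to invoke and why.

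\begin{proof}
For $\ga=0$ the wavelets are unit-scale translates, so $|a_\om|=|\langle M_j,\Psi^{(F,\dots,F)}_{\vec\mu}\rangle|\le\|M_j\|_{L^\nf}\|\Psi^{(F,\dots,F)}_{\vec\mu}\|_{L^1}\le C2^{-j\la}$, using $\|M_j\|_{L^\nf}\le C2^{-j\la}$ from Corollary \ref{mm} (the $\al=0$ case) and the fixed $L^1$ norm of the compactly supported $\Psi$; this is \eqref{887} with $\ga=0$.

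For $\ga\ge1$ we apply Lemma \ref{smooth} to $\si=M_j$. The hypothesis there requires $\|\p^\al M_j\|_{L^\nf}\le C_M$ for all $|\al|\le M$ with $M=4n+6$ the number of vanishing moments of $\psi_M$; Corollary \ref{mm} gives exactly this with $C_M=C2^{-j\la}$, a bound that is uniform in $\al$. Hence Lemma \ref{smooth} yields $|a_\om|=|\langle\Psi^{\ga,G}_{\vec\mu},M_j\rangle|\le CC_M2^{-(M+n)\ga}=C2^{-j\la}2^{-(M+n)\ga}$, which is the claimed estimate.

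The point worth emphasizing — and the only place any care is needed — is the dilation from $m_j$ to $M_j$: the derivative bounds $\|\p^\al m_j\|_{L^\nf}\le C2^{-j(\la-|\al|)}$ from Lemma \ref{om} grow in $|\al|$, so Lemma \ref{smooth} could not be applied to $m_j$ directly. Rescaling to $M_j(\xi,\eta)=m_j(2^{-j}\xi,2^{-j}\eta)$ converts each derivative loss $2^{j|\al|}$ into a gain $2^{-j|\al|}$ from the chain rule, producing the $|\al|$-independent bound $2^{-j\la}$ needed to invoke the wavelet coefficient estimate with a constant $C_M$ that does not depend on the multiindex. With that observation in place the corollary is immediate.
\end{proof}
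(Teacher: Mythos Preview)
Your proof is correct and follows exactly the paper's route: apply Lemma \ref{smooth} to $\si=M_j$ using the uniform derivative bound $\|\p^\al M_j\|_{L^\nf}\le C2^{-j\la}$ from Corollary \ref{mm}. The separate treatment of $\ga=0$ is unnecessary since Lemma \ref{smooth} already covers $\ga\in\mathbb N_0$, and your final paragraph about the rescaling is precisely the motivation the paper gives just before introducing $M_j$.
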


Before coming to the proof of Proposition \ref{De}, we make a remark. The functions 
$\psi_F$ and $\psi_M$ have compact supports, and all elements in
a fixed level, which means that they have the same dilation factor $\ga$,
in the basis come from translations of finitely many products, so we can  
classify the elements in the basis into finitely many classes so that
all elements in the same level in each class have disjoint supports.
From now on, we can always assume that the supports of $\om$'s 
related to a given dilation
factor $\ga$ are disjoint.

\section{The Proof of Proposition \ref{De}}

With the wavelet decompositions in hand, we are able to prove Proposition \ref{De}. The proof
is inspired by the square function technique (see \cite{Stein1971a} and \cite{Carbery1983} ) and \cite{Grafakos2015}. We control $T_j$ by two integrals with the diagonal part and the off-diagonal parts. For the diagonal part we have just one term, which can be
handled using product wavelets. For the off-diagonal parts
 we introduce two square operators with each one 
 bounded by the Hardy-Littlewood maximal function and a bounded linear operator. 

We need to decompose $M_j$ further. Take $N$ to be a fixed large enough number 
so that $N/10$ is greater than $d$, the diameters of all $\om$ with dilation factor $\ga=0$.
We write $\om(\xi,\eta)=\om_{\vec \mu}(\xi,\eta)=\om_{1,k}(\xi)\om_{2,l}(\eta)$,
where $\vec\mu=(k,l)$ with $k,l\in \mathbb Z^n$, and denote the corresponding 
coefficient $<\om_{k,l},M_j>$ by $a_{k,l}$. We 
define
\begin{equation}
M_j^1=\sum_{\ga}\sum_{|k|\ge N} \sum_{|l|\ge N} a_{k,l}\om_{1,k}\om_{2,l}
\end{equation}
\begin{equation}
M_j^2=\sum_{\ga}\sum_{k} \sum_{|l|\le N} a_{k,l}\om_{1,k}\om_{2,l}
\end{equation}
\begin{equation}
M_j^3=\sum_{\ga}\sum_{|k|\le N} \sum_{|l|\ge N} a_{k,l}\om_{1,k}\om_{2,l}.
\end{equation}
Here $M_j^1$ is the diagonal part whose support is away from both $\xi$ and $\eta$ axes,
$M_j^2$ is the off-diagonal part with the support near the $\eta$ axis, and the support of  
$M_j^3$
 is near the $\xi$ axis. Corresponding to $M_j^i$, we  define
 $m^i_j(\xi,\eta)=M_j^i(2^j\xi, 2^j\eta)$ for $i=1,2,3$.
 Denote   
$$
A_{j,t}(f,g)(x)=
\int_{\mathbb R^{2n}}m_j(t\xi,t\eta)\wh f(\xi)\wh g(\eta)e^{2\pi i x\cdot (\xi+\eta)}d\xi d\eta ,
$$
which equals  $\sum_{i=1}^3A_{j,t}^i(f,g)(x)$
with $A_{j,t}^i(f,g)(x)$ associated with $m_j^i$.

We   need the following lemma to handle $A_{j,t}^1(f,g)(x)$.
\begin{lem}\label{gra}
The gradients $\nabla M_j^i$
and $\nabla m_j^i$ are defined pointwisely for $i=1,2,3$. Moreover
we have the expression
\begin{equation}
\nabla M_j^i(\xi,\eta)=\sum_{\ga}\sum_{k,l}
a_{k,l}\nabla_{(\xi,\eta)}(\om_{1,k}\otimes\om_{2,l})(\xi,\eta),
\end{equation}
with the explanation that the second summation is over allowed pairs $(k,l)$
related to $M_j^i$.
\end{lem}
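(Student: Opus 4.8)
The plan is to treat this lemma as a routine justification of term-by-term differentiation of the wavelet series defining each $M_j^i$. Recall the ingredients already available: the series $M_j=\sum a_\om\om$ converges in $L^2(\bbr^{2n})$; by the corollary following \eqref{emj} the coefficients satisfy $|a_{k,l}|\le C2^{-j\la}2^{-(M+n)\ga}$ with $M=4n+6$, where $\ga$ denotes the dilation index of $\om=\Psi^{\ga,G}_{\vec\mu}$; each $\psi_F,\psi_M$ lies in $\mathcal C^k(\bbr)$ with $k\ge1$ and is compactly supported, so every $\om_{1,k}\otimes\om_{2,l}$ is $\mathcal C^1$ with $\|\nabla_{(\xi,\eta)}(\om_{1,k}\otimes\om_{2,l})\|_{L^\nf}\le C2^{\ga(n+1)}$ (the factor $2^{\ga n}$ comes from the $L^2$ normalization and $2^\ga$ from one derivative); and, by the remark closing Section 2, after splitting the basis into finitely many classes the $\om$'s of a fixed level $\ga$ have pairwise disjoint supports, so for each fixed $(\xi,\eta)$ the number of pairs $(k,l)$ at level $\ga$ with $(\xi,\eta)\in\mathrm{supp}(\om_{1,k}\otimes\om_{2,l})$ is bounded by a constant independent of $(\xi,\eta)$, $\ga$ and $j$ (restricting to the pairs admissible for $M_j^i$ only removes terms, so this count is unaffected).

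First I would fix $i\in\{1,2,3\}$ and a point $(\xi,\eta)$, and estimate the block of the formally differentiated series coming from a single level $\ga$: by the three facts above it is at most
$$C\cdot 2^{-j\la}2^{-(M+n)\ga}\cdot 2^{\ga(n+1)}=C2^{-j\la}2^{-\ga(M-1)}=C2^{-j\la}2^{-\ga(4n+5)},$$
which is summable in $\ga\ge0$ with a bound uniform in $(\xi,\eta)$. Hence the series $\sum_{\ga}\sum_{k,l}a_{k,l}\nabla_{(\xi,\eta)}(\om_{1,k}\otimes\om_{2,l})$, taken over the pairs $(k,l)$ admissible for $M_j^i$, converges absolutely and uniformly on $\bbr^{2n}$; the undifferentiated series $\sum_{\ga}\sum_{k,l}a_{k,l}\,\om_{1,k}\otimes\om_{2,l}$ converges absolutely and uniformly for the same reason (in fact with the better per-level bound $C2^{-j\la}2^{-(M+n)\ga}$). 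The classical theorem on differentiating a uniformly convergent series of $\mathcal C^1$ functions whose derivative series also converges uniformly then shows that the continuous sum of the undifferentiated series is $\mathcal C^1$ with gradient equal to the differentiated series. Since this continuous sum agrees a.e. with the $L^2$-element $M_j^i$, we take it as the representative of $M_j^i$, and the displayed identity is exactly the conclusion.

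Finally, since $m_j^i(\xi,\eta)=M_j^i(2^j\xi,2^j\eta)$ with $M_j^i\in\mathcal C^1$, the chain rule gives that $\nabla m_j^i$ is defined everywhere and equals $2^j(\nabla M_j^i)(2^j\xi,2^j\eta)$, completing the lemma. The only point demanding genuine care is the bounded-overlap count at a fixed point within a level, which is precisely what the classification of the basis into finitely many disjointly-supported families provides; everything else is bookkeeping, and the decisive inequality is simply that the exponent $M-1=4n+5$ is positive, so the geometric series in $\ga$ converges.
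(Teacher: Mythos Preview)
Your argument is correct. You invoke the classical theorem on term-by-term differentiation of series: the undifferentiated wavelet series for $M_j^i$ converges uniformly, the formally differentiated series converges uniformly (both thanks to the per-level bound coming from $|a_{k,l}|\le C2^{-j\la}2^{-(M+n)\ga}$, the estimate $\|\nabla\om\|_{L^\nf}\le C2^{\ga(n+1)}$, and the bounded overlap at each level), hence the sum is $\mathcal C^1$ with the displayed gradient. The chain-rule remark for $m_j^i$ is fine.

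The paper takes a more hands-on route: it fixes $(\xi,\eta)$ and estimates the difference quotient $(M_j^1(\xi+he_1,\eta)-M_j^1(\xi,\eta))/h$ directly, splitting the sum over levels according to whether $|h|\le C2^{-\ga}$ or $|h|\ge C2^{-\ga}$. On the small-$\ga$ range it uses the mean value theorem term by term; on the large-$\ga$ range it simply bounds each surviving term by its $L^\nf$ norm and uses that there are few of them. Summing gives $|M_j^1(\xi+he_1,\eta)-M_j^1(\xi,\eta)|\le C2^{-j\la}(|h|+|h|^M)$, and then a dominated-convergence argument for the series identifies the limit with the term-by-term derivative. The underlying numerics are identical to yours (the decisive exponent is again $M-1>0$), but your packaging via the classical uniform-convergence theorem is cleaner and avoids the $h$-dependent splitting.
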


\begin{proof}
It suffices to verify that  $\p_{\xi_1}M_j^1(\xi,\eta)$ exists for any $(\xi,\eta)$.
We claim that 
$$
\lim_{h\to 0}\f{M_j^1(\xi+he_1,\eta)-M_j^1(\xi,\eta)}{h}
=\sum_{\ga}\sum_k\sum_{|l|\ge N}a_{k,l} 
\p_{\xi_1}\om_{1,k}(\xi)\om_{2,l}(\eta),
$$
where $e_1=(1,0,\dots, 0)\in \rn$. We call on the Lebesgue dominated convergence theorem,
so what we need to show actually is that for $h$ small enough, there exists a constant
$C$ independent of $h$ such that
$|\f{M_j^1(\xi+he_1,\eta)-M_j^1(\xi,\eta)}{h}|\le C$.

We split the levels depending on
whether $h$ is much smaller than $C2^{-\ga}$, the diameters of the supports of $\om$'s.
If $|h|\le C2^{-\ga}$, then the difference 
$\om_{1,k}(\xi+he_1,\eta)-\om_{1,k'}(\xi,\eta)$ with $|k-k'|\le 1$ is 
controlled by $C|h|(|\p_1\om_{1,k}(\xi,\eta)|+|\p_1\om_{1,k'}(\xi,\eta)|)$, which is 
dominated by $C|h|2^{\ga}2^{\ga n/2}$. Recall  the disjointness of the supports of $\om$'s, 
so in each level, there exists at most one $\om$ such that $\om(\xi,\eta)\neq0$.
Consequently,
\begin{align*}
&|M_j^1(\xi+he_1,\eta)-M_j^1(\xi,\eta)|\\
=&\Big|\sum_{C2^{-\ga}\ge |h|}a_{k,l}[\om_{1,k}(\xi+he_1)-
\om_{1,k'}(\xi)]\om_{2,l}(\eta)\\
&+\sum_{C2^{-\ga}\le |h|}[a_{k_1,l_1}\om_{1,k_1}(\xi+he_1)\om_{2,l_1}(\eta)-
a_{k_2,l_2}
\om_{1,k_2}(\xi)\om_{2,l_2}(\eta)]\Big|\\
\le& \sum_{C2^{-\ga}\ge |h|}C2^{-j\la} 2^{-(M+n)\ga}2^{\ga n}2^{\ga}|h|+
\sum_{C2^{-\ga}\le |h|} C2^{-j\la} 2^{-(M+n)\ga}2^{\ga n}\\
\le &C2^{-j\la}(|h|+|h|^M)\\
\le& C2^{-j\la}|h|.
\end{align*}
This concludes the proof of Lemma \ref{gra}.
\end{proof}

 For $f,g\in \mathcal S(\rn)$, using 
 Corollary \ref{mm}, we can rewrite
 \begin{align*}
 &A^1_{j,t}(f,g)(x)\\
 =&\int\int m_j^1(t\xi,t\eta)\wh f(\xi)\wh g(\eta)
 e^{2\pi ix\cdot (\xi+\eta)}d\xi d\eta\\
 =&\int_0^t\int\int (s\xi,s\eta)\cdot\nabla m_j^1(2^js\xi,2^js\eta)\wh f(\xi)\wh g(\eta)
 e^{2\pi ix\cdot (\xi+\eta)}d\xi d\eta\f{ds}s\\
 =&\int_0^t\int\int (2^js\xi,2^js\eta)\cdot\nabla M_j^1(2^js\xi,2^js\eta)\wh f(\xi)\wh g(\eta)
 e^{2\pi ix\cdot (\xi+\eta)}d\xi d\eta\f{ds}s,
 \end{align*}
 where the existence of $\nabla M_j^1$
 and $\nabla m_j^1$ are ensured by Lemma \ref{gra}.
 
 Define the operator $\tilde B_{j,s}^1(f,g)(x)$ related to $( s\xi, s\eta)\cdot\nabla M_j^1( s\xi, s\eta)$
 as
 $\int\int ( s\xi, s\eta)\cdot\nabla M_j^1( s\xi, s\eta)\wh f(\xi)\wh g(\eta)
 e^{2\pi ix\cdot (\xi+\eta)}d\xi d\eta$. Then we have the pointwise estimate
 \begin{equation}\label{Tj1}
 T^1_j(f,g)(x)=\sup_{t>0} |A^1_{j,t}(f,g)(x)|\le \int_0^{\nf}|\tilde B_{j,s}^1(f,g)(x)|\f{ds}s
 \end{equation}
 
We now turn to the study of the boundedness of  $\tilde B_{j,s}^1$.
We set 
$\tilde B_{j,s,\ga}^1(f,g)(x)=\int\int ( s\xi, s\eta)\cdot\nabla M_{j,\ga}^1( s\xi, s\eta)\wh f(\xi)\wh g(\eta)
 e^{2\pi ix\cdot (\xi+\eta)}d\xi d\eta,$
 where
$\nabla M_{j,\ga}^1(\xi,\eta)=\sum_k\sum_la_{k,l} 
\nabla_{(\xi,\eta)}(\om_{1,k}\otimes\om_{2,l})(\xi,\eta).
$
 For the standard case $s=1$, we have the following esitmate.

\begin{prop}\label{BL}
For  $\tilde B^1_{j,1,\ga}$ we have the estimate
$$
\|\tilde B^1_{j,1,\ga}(f,g)\|_{L^1}\le
C C(j,\ga)\|\wh f\chi_{E}\|_{L^2}
\|\wh g\chi_{E}\|_{L^2},
$$
with $C(j,\ga)=C2^{-j(5\la-4n-3)/5}2^{-\ga
(M-4n-5)/5}$, where the set
$E$ is defined as $\{\xi:C2^{-\ga}\le|\xi|\le 2^j\}$.

\end{prop}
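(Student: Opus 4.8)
The plan is to treat $\tilde B^1_{j,1,\ga}$ as a bilinear Fourier multiplier operator and bound it in two moves: an $L^2\times L^2\to L^2$ estimate that uses only that its symbol sits on an $O(1)$-thin annulus of radius $\sim2^j$, and a local-to-global passage to $L^1$ that uses that the symbol is smooth at scale $2^{-\ga}$. Write $\si_\ga(\xi,\eta)=(\xi,\eta)\cdot\nabla M^1_{j,\ga}(\xi,\eta)$. First I record the book-keeping. A nonzero $a_{k,l}=\langle\om_{1,k}\otimes\om_{2,l},M_j\rangle$ forces the cube $Q_k\times Q_l$ (side $\sim2^{-\ga}$, center $(2^{-\ga}k,2^{-\ga}l)$) to meet $\{|(\xi,\eta)|\in[2^j-1,2^j-\tfrac14]\}$ and $|k|,|l|\ge N$; hence every $Q_k$ and every $Q_l$ lies in $E=\{C2^{-\ga}\le|\cdot|\le2^j\}$, so $\mathrm{supp}\,\si_\ga\subset E\times E$ and we may assume $\wh f=\wh f\chi_E$, $\wh g=\wh g\chi_E$, while $\mathrm{supp}\,\si_\ga$ lies in an annulus of radius $\sim2^j$ and width $O(1)$. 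From $M^1_{j,\ga}=\sum a_{k,l}\,\om_{1,k}\otimes\om_{2,l}$ with $|a_{k,l}|\lesssim2^{-j\la}2^{-(M+n)\ga}$, the disjointness of the $\om_{1,k}$ and of the $\om_{2,l}$, the wavelet bound $\|\p^\be(\om_{1,k}\otimes\om_{2,l})\|_{L^\nf}\lesssim2^{\ga n}2^{\ga|\be|}$, and $|\xi|\lesssim2^j$ on the annulus, one gets $\|\p^\al\si_\ga\|_{L^\nf}\lesssim2^{j(1-\la)}2^{-(M-1)\ga}2^{\ga|\al|}$ for all $\al$ (the order being limited only by the smoothness of the wavelets in Lemma~\ref{wave}, which we may take arbitrarily large). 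Move 1: for any $\si$ supported on $\{|(\xi,\eta)|\in[2^j-2,2^j]\}$, Fourier transforming $T_\si(f,g)$ in $x$ gives $\widehat{T_\si(f,g)}(\zeta)=\int\si(\zeta-\eta,\eta)\wh f(\zeta-\eta)\wh g(\eta)\,d\eta$; Cauchy--Schwarz in $\eta$ and Fubini yield $\|T_\si(f,g)\|_{L^2}\le(\sup_\zeta\int|\si(\zeta-\eta,\eta)|^2d\eta)^{1/2}\|f\|_{L^2}\|g\|_{L^2}$, and the fibre $\{\eta:(\zeta-\eta,\eta)\in\mathrm{supp}\,\si\}$ is a spherical shell of measure $\lesssim2^{j(n-1)}$ (for $n\ge2$; $\lesssim2^{j/2}$ for $n=1$). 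Applied to $\si_\ga$: $\|\tilde B^1_{j,1,\ga}\|_{L^2\times L^2\to L^2}\lesssim2^{j(n-1)/2}2^{j(1-\la)}2^{-(M-1)\ga}=2^{j((n+1)/2-\la)}2^{-(M-1)\ga}$.

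Move 2: decompose the kernel $K_\ga=\si_\ga^\vee$ smoothly, $K_\ga=\sum_{R\ge1}K_\ga\phi_R$ with $\phi_R$ supported in $\{|w|\sim R2^\ga\}$ for $R\ge2$ and in $\{|w|\lesssim2^\ga\}$ for $R=1$, and put $\si_\ga^{(R)}=(K_\ga\phi_R)^\vee=\si_\ga*\check\phi_R$. For $R\ge2$, $\check\phi_R$ has all moments zero and lives at scale $(R2^\ga)^{-1}$, so Taylor expanding $\si_\ga$ and using the derivative bounds gives $\|\si_\ga^{(R)}\|_{L^\nf}\lesssim R^{-m}\|\si_\ga\|_{L^\nf}$ for every $m$; since $\si_\ga^{(R)}$ is still essentially annulus-supported, Move 1 applies and gives $\|T_{\si_\ga^{(R)}}\|_{L^2\times L^2\to L^2}\lesssim R^{-m/2}2^{j((n+1)/2-\la)}2^{-(M-1)\ga}$. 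As the kernel of $T_{\si_\ga^{(R)}}$ is supported in $\{|(y,z)|\sim R2^\ga\}$, this operator reads each input only within $\sim R2^\ga$; covering $\rn$ by cubes $Q$ of side $R2^\ga$,
\[
\|T_{\si_\ga^{(R)}}(f,g)\|_{L^1}\le\sum_Q|Q|^{1/2}\big\|T_{\si_\ga^{(R)}}(f\chi_{3Q},g\chi_{3Q})\big\|_{L^2}\lesssim(R2^\ga)^{n/2}\|T_{\si_\ga^{(R)}}\|_{L^2\times L^2\to L^2}\|f\|_{L^2}\|g\|_{L^2},
\]
by Cauchy--Schwarz on each $Q$ and bounded overlap of the $3Q$'s. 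Summing in $R$ (pick $m>n+2$),
\[
\|\tilde B^1_{j,1,\ga}(f,g)\|_{L^1}\lesssim2^{\ga n/2}2^{j((n+1)/2-\la)}2^{-(M-1)\ga}\|\wh f\chi_E\|_{L^2}\|\wh g\chi_E\|_{L^2}=2^{j((n+1)/2-\la)}2^{-(M-1-n/2)\ga}\|\wh f\chi_E\|_{L^2}\|\wh g\chi_E\|_{L^2}.
\]
Since $(n+1)/2\le(4n+3)/5$ and, for $M=4n+6$, $M-1-\tfrac n2=\tfrac{7n+10}2\ge\tfrac{M-4n-5}5=\tfrac15$, this is dominated by $C\,2^{-j(5\la-4n-3)/5}2^{-\ga(M-4n-5)/5}\|\wh f\chi_E\|_{L^2}\|\wh g\chi_E\|_{L^2}$, the asserted bound (with room to spare).

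The step needing the most care is Move 2: checking that the smoothed pieces $\si_\ga^{(R)}$ keep both the annulus localization (so Move 1 still applies to them) and the gain $R^{-m}$ (so the $R$-sum converges with room), which is precisely where arbitrarily many derivative bounds on $\si_\ga$ — hence the freedom in the smoothness order of the wavelets — are used. A more elementary variant, matching the ``product wavelet'' phrasing of the paper, is: split $\nabla M^1_{j,\ga}$ by Leibniz, write the operator as $\sum_{k,l}a_{k,l}(S_{\xi\cdot\nabla\om_{1,k}}f)(S_{\om_{2,l}}g)$ with $S_\phi$ the linear multiplier of symbol $\phi$, pass to $L^1$ termwise via $\|uv\|_{L^1}\le\|u\|_{L^2}\|v\|_{L^2}$, and sum over the active pairs — which lie on a thin spherical shell of $\zn\times\zn$ — by a dyadic splitting of the positions $2^{-\ga}|k|,2^{-\ga}|l|$; there, however, a naive count reaches only the exponent $n+\tfrac12$, and to obtain $\tfrac{4n+3}5$ one must play the thinness of the shell against the radial factor $2^{-\ga}|k|$ and optimize the splitting threshold, which is what produces the denominator $5$.
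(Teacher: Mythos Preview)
Your argument is essentially correct and takes a genuinely different route from the paper's. The paper proceeds exactly as you sketch in your final paragraph: it writes the operator in product form $\sum_{k,l}a_{k,l}(S_{v_k}f)(S_{\om_{2,l}}g)$, partitions the index set into level sets $U_r=\{(k,l):|a_{k,l}|\sim 2^{-r}\|a\|_\infty\}$, splits each $U_r$ into three pieces according to column/row cardinalities against an optimized threshold $N_1=(2^r\|a\|_2/\|a\|_\infty)^{2/5}$, and arrives at the bound $\|a\|_2^{4/5}\|a\|_\infty^{1/5}\,2^{\ga(n+1)}2^j$; inserting $\|a\|_2\le\|M_j\|_{L^2}$ and $\|a\|_\infty\lesssim 2^{-j\la}2^{-(M+n)\ga}$ yields the stated $C(j,\ga)$. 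Your Fourier-analytic route instead exploits the thin-annulus geometry through the fibre estimate $\|T_\si\|_{L^2\times L^2\to L^2}\le(\sup_\zeta\int|\si(\zeta-\eta,\eta)|^2d\eta)^{1/2}$ and then passes to $L^1$ by spatial localization of the kernel. The resulting $j$-exponent $(n+1)/2-\la$ is strictly better than the paper's $(4n+3)/5-\la$ for every $n\ge 1$, and your $\ga$-exponent $-(M-1-n/2)$ is far better than $-(M-4n-5)/5$; so you actually prove more than is claimed, and the combinatorial optimization producing the ``denominator $5$'' is bypassed entirely.

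The one point requiring care is exactly the one you flag: $\si_\ga^{(R)}=\si_\ga*\wh\phi_R$ is not compactly supported, so Move~1 does not apply to it as stated. This is fixable. One clean way: combine the two pointwise bounds $|\si_\ga^{(R)}|\lesssim R^{-m}\|\si_\ga\|_{L^\infty}$ (from the vanishing moments) and, at distance $d\ge 1$ from $\mathrm{supp}\,\si_\ga$, $|\si_\ga^{(R)}|\lesssim\|\si_\ga\|_{L^\infty}(R2^\ga d)^{2n-N}$ (from the Schwartz tail of $\wh\phi_R$); interpolating and summing the fibre integral over dyadic $d$-shells around the annulus recovers $\sup_\zeta\int|\si_\ga^{(R)}(\zeta-\eta,\eta)|^2d\eta\lesssim R^{-2m}2^{j(n-1)}\|\si_\ga\|_{L^\infty}^2$. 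Alternatively, writing the Taylor remainder in integral form expresses $\si_\ga^{(R)}$ as a superposition of translates of $\p^\al\si_\ga$, each of which \emph{is} annulus-supported, and the fibre bound then applies termwise. Either way only finitely many derivatives of $\si_\ga$ are needed (any $m>n/2$ suffices), well within the $C^{4n+6}$ regularity of the wavelets fixed in the paper.
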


In the estimate of the diagonal part in \cite{Grafakos2015}
we use mainly (i) the size of $a_{k,l}$, (ii) the disjointness of the supports of $\om$'s, and (iii)
that $\om$ is a tensor product. So it is easy to obtain Proposition \ref{BL} by examining the proof
in \cite{Grafakos2015} carefully. We sketch the proof here for the sake of completeness.
\begin{proof}
It suffices to consider, for example, $\xi_1\p_{\xi_1}M_{j,\ga}^1(\xi,\eta)$, and by Lemma \ref{gra}, 
this is 
$\sum_k\sum_{l}a_{k,l} 
\p_{\xi_1}\om_{1,k}(\xi)\xi_1\om_{2,l}(\eta)$ for allowed $k,\ l$ in $M^1_{j}$. We rewrite
this as $\sum_k\sum_{l}a_{k,l} 
v_k(\xi)\om_{2,l}(\eta)$, where $v_k(\xi)=\p_{\xi_1}\om_{1,k}(\xi)\xi_1$,
which has the same support as that of $\om_{1,k}$. We denote by $\|a\|_{\nf}$ the $\ell^{\nf}$
norm of $\{a_{k,l}\}_{k,l}$ when $\om_{k,l}$ has the dilation factor $\ga$. 

Define for $r\ge 0$ the set 
$$
U_r=\{(k,l)\in\mathbb Z^{2n}: 2^{-r-1}\|a\|_{\nf}<\|a_{k,l}|\le 2^{-r}\|a\|_{\nf}\},
$$
which has the cardinality at most $C\|a\|_{2}^2\|a\|_{\nf}^22^{-2r}$, where $\|a\|_2$ is the 
$\ell^2$ norm of $\{a_{k,l}\}$ bounded by $\|M_j\|_{L^2}$.
Let $N_1=(2^r\|a\|_{2}/\|a\|_\nf)^{2/5}$,
and define
$$
U_r^1=\{(k,l)\in U_r: \text{card} \{s:(k,s)\in U_r\}\ge N_1\},
$$
$$
U_r^2=\{(k,l)\in U_r\backslash U_r^1: \text{card} \{s:(s,l)\in U_r\backslash U_r^1\}\ge N_1\},
$$
and $U^3_r$ as the remaining set in $U_r$.

For $U^1_r$ we define a related set $E=\{k: \exists\ l\ \text{s.t.} (k,l)\in U^1_r\}$, and its cardinality
$N_2=|E|$ is bounded by $C(N_1)^4$.
We denote by $S_1$ the bilinear operator related to the multiplier 
$\sum_{(k,l)\in U^1_r}a_{k,l}v_k\om_{2,l}$. By an argument similar to \cite{Grafakos2015}
using the three facts we mentioned 
before this proof  we see that
$\|S_1(f,g)\|_{L^1}\le CN_1^22^{\ga(n+1)}2^j2^{-r}\|a\|_{\nf}\|f\|_{L^2}\|g\|_{L^2}$.
We can similarly define $S_2$ and get the same estimate. 
We can classify  $U_r^3$ into $N_1^2$ classes so that for $(k,l), \ (k',l')$ in the same
class with $(k,l)\neq (k',l')$, we must have $k\neq k'$ and $l\neq l'$. This observation enables
us to bound the norm of $S_3$, the operator related to $U_r^3$, by
$CN_1^22^{\ga(n+1)}2^j2^{-r}\|a\|_{\nf}$. 

Using the expression of $N_1$, we obtain that 
$$
\|(S_1+S_2+S_3)(f,g)\|_{L^1}\le C\|a\|_2^{4/5} \|a\|^{1/5}_{\nf}2^{\ga(n+1)}2^j2^{-r/5}
\|f\|_{L^2}\|g\|_{L^2}.
$$
Summing over $r$ and the observation that
in the support of $M_{j,\ga}^1$ we have $\xi\in E_1=\{\xi:2^{-\ga}\le |\xi|\le 2^j\}$
 imply that
$$
\|\tilde B^1_{j,1,\ga}(f,g)\|_{L^1}\le
C \|M_j\|_{L^2}^{4/5}\|a\|_{\nf}^{1/5}2^{\ga(n+1)}2^j\|\wh f\chi_{E}\|_{L^2}
\|\wh g\chi_{E}\|_{L^2},
$$
where the bound is controlled by
$$C(j,\ga)= C2^{-j(5\la-4n-3)/5}2^{-\ga
(M-4n-5)/5}
$$
since $\|M_j\|_{L^2}\le C2^{jn}2^{-j(\la+1/2)}$
by Corollary \ref{mm}.

\end{proof}

Notice that we will have enough decay in $j$ and $\ga$ if
$\la>(4n+3)/5$ and $M>4n+5$, which are satisfied by our assumptions on $\la$ and
choices of wavelets.
\begin{cor}\label{dj}
For the diagonal part we have
$$
\|T_j^1(f,g)\|_{L^1}\le CC(j)\|f\|_{L^2}\|g\|_{L^2},
$$
where $C(j)=\sum_{\ga}C(j,\ga)(j+\ga)$.
\end{cor}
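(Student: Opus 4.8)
\textbf{Proof proposal for Corollary \ref{dj}.}
The plan is to combine the pointwise domination \eqref{Tj1} of $T_j^1$ by a $ds/s$ integral of the $\tilde B_{j,s}^1$ operators with the single-scale estimate of Proposition \ref{BL}, after first decomposing $\tilde B_{j,s}^1=\sum_\ga \tilde B_{j,s,\ga}^1$ into its wavelet levels. For a fixed level $\ga$, the key point is that Proposition \ref{BL} was proved at the normalized scale $s=1$, and by dilation invariance the same bound holds at every scale $s>0$: replacing $(f,g)$ by suitable dilates shows
$$
\|\tilde B_{j,s,\ga}^1(f,g)\|_{L^1}\le C\,C(j,\ga)\,\|\wh f\,\chi_{E_s}\|_{L^2}\|\wh g\,\chi_{E_s}\|_{L^2},
$$
where $E_s=\{\xi:\, C2^{-\ga}/s\le |\xi|\le 2^j/s\}$ is the $s$-dilate of the frequency annulus $E$ appearing in Proposition \ref{BL}. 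The annular localization in the frequency variable is what will make the $ds/s$ integral converge: it restricts $\xi$ (and $\eta$) to a dyadic range of length $\log_2(2^{\ga+j}/C)\sim j+\ga$ in $\log s$.

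First I would write
$$
\|T_j^1(f,g)\|_{L^1}\le \sum_\ga \int_0^\nf \|\tilde B_{j,s,\ga}^1(f,g)\|_{L^1}\,\f{ds}s
\le \sum_\ga C\,C(j,\ga)\int_0^\nf \|\wh f\,\chi_{E_s}\|_{L^2}\|\wh g\,\chi_{E_s}\|_{L^2}\,\f{ds}s.
$$
By Cauchy--Schwarz in $\xi$ and $\eta$ separately the inner integrand is at most $\|\wh f\,\chi_{E_s}\|_{L^2}\|\wh g\,\chi_{E_s}\|_{L^2}$; then I would bound the $s$-integral by Cauchy--Schwarz in $s$, using that for fixed $\xi$ the set of $s$ with $\xi\in E_s$ is an interval of length $\sim j+\ga$ in the measure $ds/s$. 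This gives
$$
\int_0^\nf \|\wh f\,\chi_{E_s}\|_{L^2}\|\wh g\,\chi_{E_s}\|_{L^2}\,\f{ds}s
\le \Big(\int_0^\nf \|\wh f\,\chi_{E_s}\|_{L^2}^2\f{ds}s\Big)^{1/2}\Big(\int_0^\nf \|\wh g\,\chi_{E_s}\|_{L^2}^2\f{ds}s\Big)^{1/2}
\le C(j+\ga)\|\wh f\|_{L^2}\|\wh g\|_{L^2},
$$
where the last step swaps the order of integration (Tonelli) and uses $\int_0^\nf \chi_{E_s}(\xi)\,ds/s\le C(j+\ga)$ together with Plancherel. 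Summing over $\ga$ and recalling $C(j,\ga)=C2^{-j(5\la-4n-3)/5}2^{-\ga(M-4n-5)/5}$ with $M=4n+6$ yields $\sum_\ga C(j,\ga)(j+\ga)=:C(j)$, which converges since $M-4n-5=1>0$, and we obtain $\|T_j^1(f,g)\|_{L^1}\le CC(j)\|f\|_{L^2}\|g\|_{L^2}$ as claimed.

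The main obstacle, and the step requiring the most care, is justifying the scaling reduction and the interchange of $\sup_{t>0}$ with the sum over $\ga$ and the $ds/s$ integration: one must check that $\tilde B_{j,s,\ga}^1$ genuinely obeys the dilated form of Proposition \ref{BL} uniformly in $s$ (tracking how the set $E$ and the constant $C(j,\ga)$ transform under $\xi\mapsto s\xi$), and that the pointwise bound $T_j^1(f,g)(x)\le\sum_\ga\int_0^\nf|\tilde B_{j,s,\ga}^1(f,g)(x)|\,ds/s$ is legitimate, which follows from \eqref{Tj1}, Lemma \ref{gra} (so that the gradient decomposition converges pointwise and may be summed term by term), and the triangle inequality. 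Once these bookkeeping points are settled, the remaining computation is the routine double Cauchy--Schwarz above.
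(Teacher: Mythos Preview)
Your proposal is correct and follows essentially the same approach as the paper: dominate $T_j^1$ pointwise by \eqref{Tj1}, split into wavelet levels $\ga$, reduce the scale-$s$ estimate to Proposition \ref{BL} by dilation, and then use Cauchy--Schwarz in $ds/s$ together with the fact that $\int_0^\nf \chi_{E_s}(\xi)\,ds/s\le C(j+\ga)$. The only cosmetic difference is that the paper encodes the dilation by replacing $(f,g)$ with $(f_s,g_s)$ where $\wh f_s(\xi)=s^{-n/2}\wh f(\xi/s)$ and keeping the fixed set $E$, whereas you keep $(f,g)$ and dilate the set to $E_s$; these are the same computation after a change of variables.
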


\begin{proof}
From \eqref{Tj1} we know that
$$\|T_j^1(f,g)\|_{L^1}\le
\int_0^{\nf}\|\tilde B^1_{j,s}(f,g)\|_{L^1}\f{ds}{s}
\le 
\int_0^{\nf}\sum_{\ga}\|\tilde B^1_{j,s,\ga}(f_s,g_s)\|_{L^1}\f{ds}{s},
$$
where $\wh f_s(\xi)=s^{-n/2}\wh f(\xi/s)$. Applying Lemma \ref{BL}, the last integral is dominated by
\begin{align*}
& C 
\int_0^{\nf}\sum_{\ga}C(j,\ga) \|\wh f_s\chi_{E_{1}}\|_{L^2}\|\wh g_s\chi_{E_{1}}\|_{L^2}\f{ds}s\\
\le &C\sum_{\ga} C(j,\ga)\big(\int_{\rn}\int_0^\nf|\wh f_s\chi_{E_1}|^2\f{ds}sd\xi\big)^{1/2}
\big(\int_{\rn}\int_0^\nf|\wh g_s\chi_{E_1}|^2\f{ds}sd\xi\big)^{1/2}.
\end{align*}
The double integral involving $f_s$ is bounded by
$\int|\wh f(\xi)|^2\int_{C2^{-\ga}/|\xi|}^{2^j/|\xi|}\tf{ds}sd\xi$, which is less than
$C(j+\ga)\|f\|_{L^2}^2$.
Hence the last sum over $\ga$ is controlled by
$C\sum_{\ga}C(j,\ga)(j+\ga)\|f\|_{L^2}\|g\|_{L^2}$.
\end{proof}

This concludes the argument of the diagonal part. We next deal with the 
the off-diagonal parts. More specifically, we  consider 
$A_{j,t}^2$ since the analysis of $A_{j,t}^3$ is similar.
Recall that 
$$A_{j,t}^2=\int_{\mathbb R^{2n}}m^2_j(t\xi,t\eta)\wh f(\xi)\wh g(\eta)e^{2\pi i x\cdot (\xi+\eta)}d\xi d\eta.$$
We denote $(\xi,\eta)\cdot(\nabla m_j^2)(\xi,\eta)$ by 
$\widetilde m_j^2(\xi,\eta)$.
Then similar to $A^2_{j,t}(f,g)(x)$ we define 
  $\tilde A^2_{j,t}(f,g)(x)$ as 
 $\int\int \wtd m_j^2(t\xi,t\eta)\wh f(\xi)\wh g(\eta)
 e^{2\pi ix\cdot (\xi+\eta)}d\xi d\eta$.
 Like before we can define $B^2_{j,s}$ and $\tilde B^2_{j,s}$ similar to 
 $A^2_{j,s}$ and $\tilde A^2_{j,s}$ with the appearances of $m$ replace by $M$.
 A simple calculation shows that $A^2_{j,s}=B^2_{j,2^js}$, and $\tilde A^2_{j,s}=\tilde B^2_{j,2^js}$.
With all these notations, we have
\begin{align*}
(A^2_{j,t}(f,g)(x))^2=&2\int_0^tA^2_{j,s}(f,g)(x)s\f{dA^2_{j,s}(f,g)(x)}{ds}\f{ds}{s}\\
=&2\int_0^tB^2_{j,2^js}(f,g)(x)\tilde B^2_{j,2^js}(f,g)(x)\f{ds}{s}\\
\le &2\int_0^{\nf}|B^2_{j,s}(f,g)(x)||\tilde B^2_{j,s}(f,g)(x)|\f{ds}{s}\\
\le &2 G_{j,s}(f,g)(x)\wtd G_{j,s}(f,g)(x),
\end{align*}
where we set 
\begin{align*}
G_{j}(f,g)(x)& =\bigg(\int_0^{\nf}|B^2_{j,s}(f,g)(x)|^2\f{ds}{s}\bigg)^{1/2} \\
\wtd G_{j}(f,g)(x)& =     \bigg(\int_0^{\nf}|\tilde B^2_{j,s}(f,g)(x)|^2\f{ds}{s}\bigg)^{1/2}.
\end{align*}  
These 
$g$-functions are bounded from $L^2\times L^2$ to $L^1$.

\begin{lem}\label{gfn}
There exists a constant $C$ independent of $j$ such that for all $f,g\in \mathcal S(\rn)$,
$$
\|G_j(f,g)\|_{L^1}\le C2^{-j(\la+1/2)}\|f\|_{L^2}\|g\|_{L^2}
$$
and
$$
\|\tilde G_j(f,g)\|_{L^1}\le C2^{-j(\la-1/2)}\|f\|_{L^2}\|g\|_{L^2}.
$$
\end{lem}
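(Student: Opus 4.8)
The plan is to reduce each $g$-function estimate to a single frequency-localized building block and then sum over the wavelet dilation parameter $\ga$. I will treat $G_j$ in detail; the bound for $\tilde G_j$ is identical except that the multiplier $\wtd m_j^2 = (\xi,\eta)\cdot(\nabla m_j^2)(\xi,\eta)$ carries one extra factor of the frequency, which on the (dilated) support $\{|(\xi,\eta)|\sim 2^j\}$ costs a factor $2^j$; this is precisely the difference between the exponents $\la+1/2$ and $\la-1/2$ in the two displayed inequalities. So it suffices to prove $\|G_j(f,g)\|_{L^1}\le C2^{-j(\la+1/2)}\|f\|_{L^2}\|g\|_{L^2}$.

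First I would expand the wavelet decomposition $M_j^2 = \sum_\ga \sum_k \sum_{|l|\le N} a_{k,l}\,\om_{1,k}\otimes\om_{2,l}$, so that $B^2_{j,s}(f,g) = \sum_\ga B^2_{j,s,\ga}(f,g)$ where $B^2_{j,s,\ga}$ is the bilinear operator with multiplier $(M^2_{j,\ga})(s\xi,s\eta)$, and $M^2_{j,\ga} = \sum_k\sum_{|l|\le N} a_{k,l}\,\om_{1,k}\otimes\om_{2,l}$. By Minkowski's inequality in the $L^2(ds/s)$ norm, $G_j(f,g)(x)\le \sum_\ga G_{j,\ga}(f,g)(x)$ with $G_{j,\ga}$ the $g$-function built from $B^2_{j,s,\ga}$. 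The key structural point about $M^2_{j,\ga}$ is that the $\eta$-variable is confined to $|l|\le N$, i.e. to a ball of fixed (bounded) radius around the origin, so that $\sum_{|l|\le N} a_{k,l}\,\om_{2,l}(\eta)$ is, for each fixed $k$, a bounded smooth bump; meanwhile the disjointness of the supports of the $\om_{1,k}$ in a fixed level lets me treat the $k$-sum almost orthogonally. Writing $\phi_k(\eta) = \sum_{|l|\le N} a_{k,l}\om_{2,l}(\eta)$ and using $|a_{k,l}|\le C2^{-j\la}2^{-(M+n)\ga}$ together with $\|\om_{2,l}\|_\infty\le C2^{\ga n/2}$, one gets $\|\phi_k\|_\infty \le C 2^{-j\la} 2^{-(M+n)\ga} 2^{\ga n/2}\cdot(\#\{|l|\le N\})\le C2^{-j\la}2^{-M\ga+...}$, which is summable in $\ga$ with room to spare once $M$ is large.

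The main step is then the single-scale square-function bound: for the operator $B^2_{j,s,\ga}$ one writes
\begin{align*}
\int_0^\infty |B^2_{j,s,\ga}(f,g)(x)|^2\,\frac{ds}{s}
&= \int_0^\infty\Big|\sum_k \big(\om_{1,k}(s\cdot)\,\wh f\,\big)^\vee(x)\,\big(\phi_k(s\cdot)\,\wh g\,\big)^\vee(x)\Big|^2\frac{ds}s .
\end{align*}
I would dominate $\big(\phi_k(s\cdot)\wh g\big)^\vee(x)$ pointwise by $C\|\phi_k\|_{\text{(suitable)}}\,M g(x)$ (Hardy–Littlewood maximal function, since $\phi_k$ is a single bounded bump supported in a fixed ball, so its inverse Fourier transform at scale $1/s$ is an approximate identity), pull $Mg(x)$ out of the $k$-sum and the $ds/s$ integral, and for the remaining factor use the almost-orthogonality of the supports of $\om_{1,k}$ in each level plus Plancherel in $s$ to get $\int_0^\infty \sum_k |(\om_{1,k}(s\cdot)\wh f)^\vee(x)|^2\,ds/s$, whose $L^1_x$–norm (after another application of Cauchy–Schwarz to split the product and integrating in $x$) is controlled by $\|f\|_{L^2}^2$ times the square of the relevant multiplier size. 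Collecting the $\ga$-dependent constants, summing the geometric series in $\ga$ (which converges because $M=4n+6$ was chosen large), and keeping track that the $\|M_j\|_{L^2}$-type bound contributes $2^{-j(\la+1/2)}$ after the $2^{jn}$ from dilation is absorbed by the support restriction, yields the claimed estimate; the extra frequency factor in $\wtd m_j^2$ upgrades $2^{-j(\la+1/2)}$ to $2^{-j(\la-1/2)}$.

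The hard part will be making the pointwise maximal-function domination of the $\eta$-factor compatible with the $L^2$-square-function estimate for the $\xi$-factor without losing the disjointness of supports — i.e. organizing the double sum over $(k,l)$ so that the $l$-sum (bounded range) feeds into the $Mg$ estimate while the $k$-sum (unbounded, but with disjoint supports per level) feeds into a Littlewood–Paley–type orthogonality in the parameter $s$. This is exactly the place where one must use that $\om = \om_{1,k}\otimes\om_{2,l}$ is a tensor product, that $|l|\le N$ is bounded, and the disjointness arranged at the end of Section 2; all three ingredients are already available, so the argument is a careful but routine bookkeeping exercise modeled on the square-function technique of \cite{Carbery1983} and \cite{Grafakos2015}.
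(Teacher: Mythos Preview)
Your overall architecture matches the paper's: fix $\ga$, exploit $|l|\le N$ to dominate the $\eta$-piece pointwise by $M(g)(x)$, reduce the $\xi$-piece to an $L^2$ square function, and sum in $\ga$. The gap is in the numerical accounting. Once you have pulled out $M(g)$ and the coefficient size $\|a\|_\infty\le C2^{-j\la}2^{-(M+n)\ga}$, you still owe an extra factor $2^{-j/2}$, and your outline does not produce it. The sentence ``the $\|M_j\|_{L^2}$-type bound contributes $2^{-j(\la+1/2)}$ after the $2^{jn}$ from dilation is absorbed by the support restriction'' is not a valid mechanism: you never actually invoke $\|M_j\|_{L^2}$ in your argument, and nothing you wrote generates a $2^{-jn}$ to cancel a $2^{jn}$. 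If you only use that the $\xi$-support of $M_{j,\ga}^2$ sits in $\{|\xi|\le 2^j\}$, the $ds/s$ integral contributes a factor of order $j+\ga$, and you end with $2^{-j\la}\sqrt{j}$ rather than $2^{-j(\la+1/2)}$.

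The missing observation, which is exactly what the paper uses, is that since $M_j$ is supported in $\{2^j-1\le|(\xi,\eta)|\le 2^j-\tfrac14\}$ and in $M_j^2$ one has $|\eta|\le CN$, the $\xi$-support of $\sum_k a_{k,l}\om_{1,k}$ lies (up to an $O(1)$ enlargement) in the \emph{thin} annulus $F=\{\xi:2^j-1\le|\xi|\le 2^j-\tfrac14\}$ of width $O(1)$ at radius $\sim 2^j$. After passing the $L^2_x$-norm of the $\xi$-square function to the frequency side, the $ds/s$ integral is restricted to $\{s:s\xi\in F\}$, so
\[
\int_{(2^j-1)/|\xi|}^{(2^j-1/4)/|\xi|}\frac{ds}{s}=\log\frac{2^j-\tfrac14}{2^j-1}\le C\,2^{-j},
\]
and its square root is the missing $2^{-j/2}$. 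This is precisely the computation leading to \eqref{gj1} in the paper. Your remark that the extra frequency factor in $\tilde B^2_{j,s}$ costs $2^j$ and converts $2^{-j(\la+1/2)}$ into $2^{-j(\la-1/2)}$ is correct and agrees with the paper.
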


As in Lemma \ref{BL},  we will
delete some details which can be found in 
\cite{Grafakos2015}.
\begin{proof}
We will focus on $\tilde G_{j}$ first.
For $\tilde G_{j}$ we need to consider two typical cases, the derivative falling on $\xi$
and the derivative falling on $\eta$.

Let us consider the operator with the multiplier $\xi_1\p_{\xi_1}M_j^2$, which equals
$\sum_{\ga}\sum_{k,l}
a_{k,l}v_{k}(\xi)\om_{2,l}(\eta)$ with 
$v_{k}(\xi)=\xi_1\p_{\xi_1}\om_{1,k}(\xi,\eta)$. The corresponding
$g$-function will be denoted by $\tilde G_j^1(f,g)$.
Let us fix $\ga$ and for each $\ga$ at most $N$ number of $\om_{2,l}$ are involved, so we can consider for a single fixed $l$.
Observe that
\begin{align*}
&\int_{\bbr^{2n}}\sum_ka_{k,l}v_k(\xi)\om_{2,l}(\eta)\wh f(\xi)\wh g(\eta)e^{2\pi ix\cdot (\xi,\eta)}
d\xi d\eta\\
=&\|a\|_{\nf}2^{\ga(n+2)/2}2^j\int_{\rn}\om_{2,l}\wh g(\eta)e^{2\pi ix\cdot \eta}d\eta
\int_{\rn}\f{\sum_ka_{k,l}v_k(\xi)}{\|a\|_{\nf}2^{\ga(n+2)/2}2^j}\wh f(\xi)e^{2\pi ix\cdot\xi}d\xi.
\end{align*}
By $|v_k|\le 2^{\ga(n+2)/2}2^j$, 
$|a_{k,l}|\le \|a\|_{\nf}$, and  the disjointness of the supports of $v_k$,
we know that
$(\sum_ka_{k,l}v_k(\xi))/(\|a\|_{\nf}2^{\ga(n+2)/2}2^j)$ is a 
compactly supported bounded function. Hence we bound
the operator related to the multiplier $\sum_ka_{k,l}v_k(\xi)\om_{2,l}(\eta)$
by $C\|a\|_{\nf}2^{\ga(n+1)}2^jM(g)(x)T_{\si}(x)$, where
$T_{\si}(f)$ satisfies that
$\|T_{\si}(f)\|_{L^2}\le C\|\wh f\chi_{F}\|_{L^2}$ with 
$F=\{\xi\in \mathbb R^n: 2^j-1\le |\xi|\le 2^j-1/4\}$.

The operator $\tilde G^1_j$ is then bounded. Indeed we can estimate it by 
a standard dilation argument as follows.
 \begin{align*}
 &\int_{\mathbb R^n}\tilde G^1_{j}(f,g)(x)dx\\
 =&\int_{\mathbb R^n}\bigg(\int_0^{\nf}|
 \int_{\bbr^{2n}}\sum_{\ga}\sum_{k,l}
a_{k,l}v_{k}(s\xi)\om_{2,l}(s\eta)\wh f(\xi)\wh g(\eta)e^{2\pi ix\cdot (\xi,\eta)}
d\xi d\eta|^2\f{ds}s\bigg)^{\frac12}dx\\
\le &C \sum_{\ga}\|a\|_{\nf}2^j2^{(n+1)\ga}\|M(g)\|_{L^2}\int_{\mathbb R^n}\bigg(\int_0^{\nf}s^{-n}
|\wh f(\xi/s) |^2\chi_{F}(\xi) d\xi\frac{ds}{s} \bigg)^{\frac12}\\
\le &  C\sum_{\ga}\|a\|_{\nf}2^j2^{(n+1)\ga}\|g\|_{L^2}\Big(\int_{\mathbb R^n}|\wh f(\xi)|^2
\int_{(2^j-1)/|\xi|}^{(2^j-1/4)/|\xi|}\f{ds}sd\xi\Big)^{\frac12}
\end{align*}
The integral with respect to $s$ is $\log\f{2^j-1/4}{2^j-1}\le C2^{-j}$. This, combined
with the bound of $\|a\|_{\nf}$, shows that the last 
summation is smaller than
\begin{equation}\label{gj1}
C\sum_{\ga}\|a\|_{\nf}2^j2^{(n+1)\ga}2^{-j/2}\|g\|_{L^2}\|f\|_{L^2}\le C2^{-j(\la-1/2)}\|g\|_{L^2}\|f\|_{L^2}.
\end{equation}

For the case the derivative falls on $\eta$, for example $\eta_1$, 
we have a similar representation
\begin{align*}
&\int_{\bbr^{2n}}\sum_ka_{k,l}\om_{1,k}(\xi)v_l(\eta)\wh f(\xi)\wh g(\eta)e^{2\pi ix\cdot (\xi,\eta)}
d\xi d\eta\\
=&\|a\|_{\nf}2^{\ga n/2}\int_{\rn}v_l(\eta)\wh g(\eta)e^{2\pi ix\cdot \eta}d\eta
\int_{\rn}\f{\sum_ka_{k,l}\om_{1,k}(\xi)}{\|a\|_{\nf}2^{\ga n/2}}\wh f(\xi)e^{2\pi ix\cdot\xi}d\xi.
\end{align*}
The first integral in the last line is  dominated by
$2^{\ga n/2}M(g)(x)$ because for the $\om_{2,l}$ with $\ga=0$, we have both
$\p_1(\om_{2,l})^{\vee}(x)e^{2\pi ix\cdot l}$ and $(\om_{2,l})^{\vee}(x)l_1e^{2\pi ix\cdot l}$
are Schwartz functions, and the number of the second type of functions is finite because 
$|l|\le N$. The operator related to the multiplier
$\sum_ka_{k,l}\om_{1,k}(\xi)v_l(\eta)$
is therefore bounded by the quantity 
$C\|a\|_{\nf}2^{\ga(n+1)}M(g)(x)T_{\si'}(f)(x)$, where $T_{\si'}$ satisfies the same property
$T_\si$ has.
For $L^1$ norm of the $g$-function $\tilde G^2_j$ related the multiplier $\sum_ka_{k,l}\om_{1,k}(\xi)v_l(\eta)$
we apply a similar argument used in estimating $\|\tilde G^1_j\|_{L^1}$ to control it
by
\begin{equation}\label{gj2}
C\sum_{\ga}2^{-j(\la+1/2)}2^{-(M-1)\ga}
\|g\|_{L^2}\|f\|_{L^2}
\le  C2^{-j(\la+1/2)}\|f\|_{L^2}\|g\|_{L^2}
\end{equation}
This esimate and
\eqref{gj1}
show that 
$$
\|\tilde G_j(f,g)\|_{L^1}\le C2^{-j(\la-1/2)}\|f\|_{L^2}\|g\|_{L^2}.
$$

For $G_j(f,g)$, a similar and simpler argument applying to the standard representation 
$\sum a_{k,l}\om_{1,k}(\xi)\om_{2,l}(\eta)$ gives that
$$
\|G_j(f,g)\|_{L^1}\le C2^{-j(\la+1/2)}\|f\|_{L^2}\|g\|_{L^2}.
$$
Here the difference $2^j$ comes from the fact that in the multiplier of $B_{j,s}^2$, we 
miss the term $(\xi,\eta)$, which is just controlled by $2^j$.
\end{proof}

\begin{cor}\label{odj}
For the off-diagonal part we have
$$
\|T_j^2(f,g)\|_{L^1}\le C2^{-j\la}\|f\|_{L^2}\|g\|_{L^2}.
$$
\end{cor}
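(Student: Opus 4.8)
The plan is to combine the estimate for $A^2_{j,t}$ just derived with the Cauchy--Schwarz inequality. Recall from the computation preceding Lemma~\ref{gfn} that we have the pointwise bound
$$
\big(A^2_{j,t}(f,g)(x)\big)^2\le 2\, G_j(f,g)(x)\,\wtd G_j(f,g)(x)
$$
uniformly in $t>0$, hence
$$
T_j^2(f,g)(x)=\sup_{t>0}\big|A^2_{j,t}(f,g)(x)\big|\le \sqrt 2\,\big(G_j(f,g)(x)\big)^{1/2}\big(\wtd G_j(f,g)(x)\big)^{1/2}.
$$
Integrating over $\rn$ and applying the Cauchy--Schwarz inequality gives
$$
\|T_j^2(f,g)\|_{L^1}\le \sqrt2\,\|G_j(f,g)\|_{L^1}^{1/2}\,\|\wtd G_j(f,g)\|_{L^1}^{1/2}.
$$
Now I insert the two bounds from Lemma~\ref{gfn}: $\|G_j(f,g)\|_{L^1}\le C2^{-j(\la+1/2)}\|f\|_{L^2}\|g\|_{L^2}$ and $\|\wtd G_j(f,g)\|_{L^1}\le C2^{-j(\la-1/2)}\|f\|_{L^2}\|g\|_{L^2}$. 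Multiplying the exponents, $2^{-j(\la+1/2)/2}\cdot 2^{-j(\la-1/2)/2}=2^{-j\la}$, which yields exactly
$$
\|T_j^2(f,g)\|_{L^1}\le C2^{-j\la}\|f\|_{L^2}\|g\|_{L^2}.
$$

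There is essentially no obstacle here: the entire content has been pushed into Lemma~\ref{gfn}, and this corollary is a one-line consequence of Cauchy--Schwarz together with the elementary identity for the geometric mean of the two decay rates. The only thing worth remarking is that $G_j$ and $\wtd G_j$ are $g$-functions defined through $B^2_{j,s}$ and $\tilde B^2_{j,s}$, and the factorization $\big(A^2_{j,t}\big)^2\le 2G_j\wtd G_j$ uses the fundamental-theorem-of-calculus identity $\big(A^2_{j,t}(f,g)(x)\big)^2=2\int_0^t A^2_{j,s}(f,g)(x)\,s\,\tfrac{d}{ds}A^2_{j,s}(f,g)(x)\,\tfrac{ds}{s}$ valid for Schwartz $f,g$ (so that $A^2_{j,s}(f,g)(x)\to 0$ as $s\to 0$, because $m^2_j$ vanishes near the origin), followed by Cauchy--Schwarz in the $s$-integral; this was already carried out in the text, so I simply invoke it.

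Finally, I note that the analogous bound $\|T_j^3(f,g)\|_{L^1}\le C2^{-j\la}\|f\|_{L^2}\|g\|_{L^2}$ for the other off-diagonal piece follows by the same argument with the roles of $\xi$ and $\eta$ interchanged, so I will state it in passing rather than repeat the proof. Combining Corollary~\ref{dj}, Corollary~\ref{odj}, and this $T_j^3$ estimate with $A_{j,t}=\sum_{i=1}^3 A^i_{j,t}$ and hence $T_j\le \sum_{i=1}^3 T_j^i$ then gives Proposition~\ref{De}, since the diagonal bound $C(j)=\sum_\ga C(j,\ga)(j+\ga)$ sums (using $M=4n+6>4n+5$) to a constant multiple of $j\,2^{-j(5\la-4n-3)/5}=j\,2^{-j(\la-(4n+3)/5)}$, which dominates the off-diagonal bound $2^{-j\la}$ for large $j$.
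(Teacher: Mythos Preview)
Your proof is correct and follows essentially the same approach as the paper: the pointwise bound $T_j^2(f,g)\le \sqrt2\,(G_j(f,g)\,\wtd G_j(f,g))^{1/2}$ from the calculation before Lemma~\ref{gfn}, then Cauchy--Schwarz in $L^1$, then the two bounds of Lemma~\ref{gfn}. Your additional remarks on $T_j^3$ and the assembly into Proposition~\ref{De} are also accurate, though they go beyond what this corollary itself requires.
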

\begin{proof}
By the calculation before Lemma \ref{gfn} we have the pointwise control
$T^2_j(f,g)(x)\le \sqrt2 (G_{j}(f,g)(x)\wtd G_{j}(f,g)(x))^{1/2}$, which and Lemma 
\ref{gfn} imply that
\begin{align*}
\|T^2_j(f,g)\|_{L^1}\le &\|\sqrt2 ( G_j(f,g)(x)\wtd  G_j(f,g)(x))^{1/2}\|_{L^1}\\
\le &C(\| G_j(f,g)(x)\|_{L^1}\|\wtd  G_j(f,g)(x)\|_{L^1})^{1/2}\\
\le & C(2^{-j(\la+1/2)}2^{-j(\la-1/2)}\|f\|_{L^2}^2\|g\|_{L^2}^2)^{1/2}\\
= & C2^{-j\la}\|f\|_{L^2}\|g\|_{L^2}
\end{align*}
In this case we have nice decay for $T_j^2$ since 
$\la>(4n+3)/5>0$. 
\end{proof}

We now can prove Proposition
\ref{De}, the key result of our theory.
\begin{proof}[Proof of Proposition
\ref{De}]
When $\la>(4n+3)/5$, the simple observation $T_j\le T^1_j+T^2_j+T^3_j$, the Corollary \ref{dj}, and Corollary \ref{odj} 
complete the proof of Proposition \ref{De}.
\end{proof}

\section{A Final Remark}

Tao \cite{Tao1998} proves that a necessary condition so that the linear maximal
Bochner-Riesz operator $B^\la_*$ is bounded on $L^p(\rn)$
is that $\la\ge\tf{2n-1}{2p}-\tf n2$. We modify his argument  in this section to show that
a similar requirement is also needed in  the bilinear
setting.

\begin{prop}\label{06152}
A necessary condition such that the bilinear maximal Bochner-Riesz operator $T^\la_*$  is bounded from $L^{p_1} (\rn)\times L^{p_2}(\rn)$ to weak $L^p(\rn)$
is that $\la\ge\tf{2n-1}{2p}-\tf {2n-1}2$.

\end{prop}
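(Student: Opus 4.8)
The plan is to adapt Tao's counterexample from \cite{Tao1998} to the bilinear setting. First I would recall the structure of Tao's argument in the linear case: one tests the maximal Bochner-Riesz operator against a function whose Fourier transform is (essentially) a smooth bump supported near the unit sphere; by choosing the dilation parameter $t$ pointwise to ``focus'' the contribution from a spherical cap at each point, one produces a function that is comparable to $|x|^{-(2n-1)/2}$ (up to logarithmic or $\la$-dependent powers) on a large region, while the input has controlled $L^{p_i}$ norm. The scaling balance between the output size, the measure of the region where it is large, and the norm of the input yields the claimed necessary condition.

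In the bilinear case I would take $f$ and $g$ both equal to this same test function $h$, so that $\wh h$ is supported in a thin annulus near $|\xi|=1$ and behaves like a radial bump. The key observation is that the bilinear symbol $m^\la(t\xi,t\eta)=(1-t^2(|\xi|^2+|\eta|^2))_+^\la$ restricted to the support of $\wh f\otimes\wh g$ localizes $(\xi,\eta)$ near the sphere $|(\xi,\eta)|=1/t$ in $\rn\times\rn$, i.e. near the ($2n-1$)-dimensional sphere. Choosing $t$ as a function of $x$ to align the stationary phase of the $(\xi,\eta)$-integral exactly as in Tao's construction, one finds that $T_*^\la(h,h)(x)$ is bounded below by a quantity comparable to $|x|^{-(2n-1)/2}$ times an appropriate power of the smoothing parameter, on a set of $x$ of comparable measure. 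The dimension of the relevant sphere is $2n-1$ (the same as for the linear Bochner-Riesz operator on $\rn$ with $n$ replaced by... careful: it is $\mathbb R^{2n}$, so the sphere is $2n-1$ dimensional), which is precisely why the exponent $(2n-1)/2$ appears rather than $(n-1)/2$.

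Then I would carry out the bookkeeping: compute $\|h\|_{L^{p_1}}=\|h\|_{L^{p_2}}$ for the rescaled test function, compute the weak-$L^p$ quasinorm lower bound for $T_*^\la(h,h)$ coming from the region where it is $\gtrsim |x|^{-(2n-1)/2}$, and let the scaling parameter tend to its extreme value. Boundedness from $L^{p_1}\times L^{p_2}$ to weak $L^p$ forces an inequality between the exponents which rearranges to $\la\ge \tfrac{2n-1}{2p}-\tfrac{2n-1}{2}$. The main obstacle I expect is the stationary-phase/focusing step: one must verify that, with the bilinear (tensor-product) input, the pointwise choice of $t=t(x)$ genuinely produces constructive interference of size $|x|^{-(2n-1)/2}$ over a set of the right measure, since the phase now involves $x\cdot(\xi+\eta)$ with $(\xi,\eta)$ on a $2n$-dimensional domain rather than the $n$-dimensional one in Tao's setup; this requires checking that the relevant Hessian is nondegenerate and that the amplitude does not decay. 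Once that focusing estimate is in place, the remainder is a routine scaling computation, which I would only sketch, referring the reader to \cite{Tao1998} for the parallel details in the linear case.
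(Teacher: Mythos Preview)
Your overall strategy---adapt Tao's Knapp-type counterexample to the bilinear setting by testing on $f=g=h$ and exploiting that the relevant sphere in $\mathbb R^{2n}$ has dimension $2n-1$---is correct and is exactly what the paper does. However, your description of the execution differs from the paper's in a way that matters. You speak of a test function whose Fourier transform is ``supported near the unit sphere'' and of carrying out a stationary-phase analysis of the $(\xi,\eta)$-integral with phase $x\cdot(\xi+\eta)$; the paper instead works entirely on the physical side. It takes the concrete Knapp bump $f_M(y)=e^{2\pi i y_n}\psi(\ep^{-1}|y'|)\psi(\ep^{-1}M^{-1/2}y_n)$ (so $\wh{f_M}$ lives near the single point $e_n$, not in a full annulus), chooses the radius $R_x=\sqrt 2\,|x|/x_n$ pointwise on the region $S_M=\{M\le|x'|\le 2M,\ M\le x_n\le 2M\}$, and plugs in the Bessel-function asymptotic $K^\la(X)=C e^{\pm 2\pi i|X|}|X|^{-(\la+(2n+1)/2)}+O(|X|^{-(\la+(2n+3)/2)})$ for the $2n$-dimensional kernel. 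This reduces the lower bound to four explicit integrals $I,II,III,IV$, with main term $I\sim M^{-(\la+(2n-1)/2)}\ep^{2n}$ and the others smaller by factors of $\ep$, $M^{-1}$, or $\ep^{-1}M^{-1/2}$ (the last handled by one integration by parts in $y_n$). The exact obstacle you flag---checking nondegeneracy of the Hessian for the phase $x\cdot(\xi+\eta)$ on the $(2n-1)$-sphere---is thereby bypassed: the kernel asymptotic already encodes that stationary-phase computation.

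Two small imprecisions in your write-up: the lower bound on $T_*^\la(h,h)(x)$ should carry the full power $|x|^{-(\la+(2n-1)/2)}$, not $|x|^{-(2n-1)/2}$ times a separate factor; and your test function should have Fourier support near a \emph{point} on the sphere (a cap), not a full thin annulus---with an annulus the product support $\{|\xi|\approx 1\}\times\{|\eta|\approx 1\}$ is not a thin neighborhood of a single sphere in $\mathbb R^{2n}$, and the focusing step becomes murkier. If you switch to the Knapp bump and the kernel asymptotic, your argument becomes the paper's.
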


\begin{rmk}
This result is meaningful only if $p<1$.
We observe that the kernel requires  another necessary condition  
 $(\la+(2n+1)/2)p\ge n$, i.e.,  $\la\ge\tf np-\tf{2n+1}2$,
 which is less restrictive than $\la\ge\tf{2n-1}{2p}-\tf {2n-1}2$.
\end{rmk}

\begin{proof}
We prove this theorem by constructing a counterexample. Let $M$
be a large number and $\ep$ a small number. Define a smooth function $\vp(x)=\vp_{\ep,M}(x)=\psi(\ep^{-1}|x'|)
\psi(\ep^{-1}M^{-1/2}x_n)$, where
$x'=(x_1,\dots, x_{n-1})$ and $\psi$ is a smooth bump function
supported in the interval $[-1,1]$. Define $f_M(y)=e^{2\pi i y_n}\vp(y)$
and $S_M=\{x: M\le|x'|\le 2M, M\le x_n\le 2M\}$.
Obviously we have $\|f_M\|_p\sim (\ep^nM^{1/2})^{1/p}$ and $|S_M|\sim M^n$.

We will show that $T^\la_*(f_M,f_M)(x)$ is bounded from below for $x\in S_M$. 
Let us take $R=R_x=\sqrt 2|x|/x_n$, which is comparable to $1$. 
Recall that the kernel $K^\la$ has the asymptotic representation for $X\in\bbr^{2n}$, as 
$|X|\to\nf$,
\begin{align*}
K^\de(X)=&\f{\Ga(\la+1)}{\pi^{\la}}\f{J_{\la+n}(2\pi |X|)}{|X|^{\la+n}}\\
=&Ce^{2\pi i|X|}|X|^{-(\la+\tf{2n+1}2)}+Ce^{-2\pi i|X|}|X|^{-(\la+\tf{2n+1}2)}+O(|X|^{-(\la+\tf{2n+3}2)}).
\end{align*}
From this and $R$ is comparable to $1$ we can control $A^\la_{1/R}(f_M,f_M)(x)$
from below by
\begin{align*}
&C_1\bigg|\int_{\rn}\int_{\rn} e^{2\pi i R|(x,x)|}|(x-y,x-z)|^{-(\la+\tf{2n+1}2)}\vp(y)\vp(z)dydz\bigg|\\
&-C_2\bigg|\int_{\rn}\int_{\rn}(e^{2\pi i (R|(x-y,x-z)|+y_n+z_n)}-e^{2\pi iR|x|})|(x-y,x-z)|^{-(\la+\tf{n+1}2)}\vp(y)\vp(z)dydz\bigg|\\
&-C_3\bigg|\int_{\rn}\int_{\rn} e^{2\pi i (-R|(x-y,x-z)|+y_n+z_n)}|(x-y,x-z)|^{-(\la+\tf{n+1}2)}\vp(y)\vp(z)dydz\bigg|\\
&-C_4\bigg|\int_{\rn}\int_{\rn} |(x-y,x-z)|^{-(\la+\tf{n+3}2)}|\vp(y)\vp(z)|dydz\bigg|\\
&=C_1I-C_2II-C_3III-C_4IV.
\end{align*}
The first term is the main term, for which we have
$I\sim M^{-(\la+\tf{2n+1}2)}\ep^{2n}M$. For the second term, we use that
$|R|(x-y,x-z)|+y_n+z_n-R|(x,x)||\le C\ep$ to control it by
$ M^{-(\la+\tf{2n+1}2)}\ep^{2n+1}M$. 
To handle the third term, we apply integration by parts to the variable $y_n$
and can show that it is bounded by 
$M^{-(\la+\tf{2n+1}2)}\ep^{2n}M(M^{-1}+\ep^{-1}M^{-1/2})$. 
It is not hard to see that
$IV\le M^{-(\la+\tf{2n+3}2)}\ep^{2n+1}M$.
So we see that
$|T^\la_*(f_M,f_M)(x)|\ge C_1M^{-(\la+\tf{2n-1}2)}\ep^{2n}[1-C_2'(\ep+M^{-1}+\ep^{-1}M^{-1/2})]$.
So if we choose $\ep$ to be a fixed small number and let $M\to\nf$, the last quantity 
is comparable to $M^{-(\la+\tf{2n-1}2)}$. Set this number to be $\al$ and if $T^\la_*$
is bounded from $L^{p_1} (\rn)\times L^{p_2}(\rn)$ to weak $L^p(\rn)$, we  have
$M^n\sim|S_M|\le|\{x:|T^\la_*(f_M,f_M)(x)|\}\ge \al|\le \|f_M\|_{p_1}^p\|f_M\|_{p_2}^p/\al^p\le M^{\tf 12+(\la+\tf{2n-1}2)p}$,
which gives the necessary condition $\la\ge\tf{2n-1}{2p}-\tf {2n-1}2$.
\end{proof}

What we have proved in Theorem  \ref{06151} and Theorem  \ref{Main} are far from
the restriction given by Proposition \ref{06152}, so is
$T^\la_*$ actually  bounded from $L^{p_1} (\rn)\times L^{p_2}(\rn)$ to weak $L^p(\rn)$
for $p>\max(\tf12,\tf{2n-1}{2\la+2n-1})$? Or do we  at least have that
$T^\la_*$ is bounded from 
$L^{2} (\rn)\times L^{2}(\rn)$ to  $L^1(\rn)$ when $\la>0$?

\bigskip

\noindent {\bf Acknowledgement:}
The author would like to thank his advisor Professor L. Grafakos for binging this problem
to his attention
and for many   valuable comments.

 

\begin{thebibliography}{99}
 

\bibitem{Bernicot2015a}
F. Bernicot, L. Grafakos, L. Song,  L. Yan.
{\em The bilinear Bochner-Riesz problem}, 
J. Anal. Math. 127 (2015), 179--217. 

\bibitem{Carbery1983}
A. Carbery.
{\em The boundedness of the maximal Bochner-Riesz operator on
  ${L}^4(\mathbb {R}^2)$}, Duke Math. J. 50 (1983), no. 2, 409--416. 
 
 
\bibitem{Carbery1988}
A. Carbery, J. Rubio~de Francia,  L. Vega.
{\em Almost everywhere summability of Fourier integrals},
J. London Math. Soc. (2) 38 (1988), no. 3, 513--524. 

\bibitem{Carleson1966}
L. Carleson.
{\em On convergence and growth of partial sums of Fourier series},
Acta Math. 116(1966), 135--157. 

\bibitem{Christ1985}
M. Christ.
{\em On almost everywhere convergence of Bochner-Riesz means in higher
  dimensions},
Proc. Amer. Math. Soc. 95 (1985), no. 1, 16--20. 

\bibitem{Daubechies1988}
I. Daubechies.
{\em Orthonormal bases of compactly supported wavelets},
Comm. Pure Appl. Math. 41 (1988), no. 7, 909--996. 

\bibitem{Fefferman1971}
C. Fefferman.
{\em The multiplier problem for the ball},
Ann. of Math. (2) 94 (1971), 330--336. 

\bibitem{Fefferman1973}
C. Fefferman.
{\em Pointwise convergence of Fourier series},
Ann. of Math. (2) 98 (1973), 551--571. 

\bibitem{Grafakos2014b}
L. Grafakos.
{\em  Classical Fourier Analysis, Third edition}, 
Graduate Texts in Mathematics, 249. Springer, New York, 2014.

\bibitem{Grafakos2014a}
L. Grafakos.
{\em  Modern Fourier Analysis, Third edition}, 
Graduate Texts in Mathematics, 250. Springer, New York, 2014.

\bibitem{Grafakos2015}
L. Grafakos, D. He,  P. Honz{\'\i}k.
{\em Rough bilinear singular integrals},
 arXiv preprint arXiv:1509.06099, 2015.

\bibitem{Grafakos2006a}
L. Grafakos, X. Li.
{\em The disc as a bilinear multiplier},
Amer. J. Math. 128 (2006), no. 1, 91--119. 

\bibitem{Hunt1968}
R.~A. Hunt.
{\em On the convergence of Fourier series},
\newblock In {\em Orthogonal Expansions and their Continuous Analogues (Proc. Conf., Edwardsville, Ill., 1967)} pp. 235--255 Southern Illinois Univ. Press, Carbondale, Ill. 1968.

\bibitem{Lacey2000}
M. Lacey, C. Thiele.
{\em A proof of boundedness of the Carleson operator},
Math. Res. Lett. 7 (2000), no. 4, 361--370. 

\bibitem{Stein1956}
E.~M. Stein.
{\em Interpolation of linear operators},
Trans. Amer. Math. Soc. 83 (1956), 482--492.

\bibitem{Stein1971a}
E.~M. Stein, G.~L. Weiss.
{\em  Introduction to Fourier Analysis on Euclidean Spaces (PMS-32)},
Volume~1.
\newblock Princeton University Press, 1971.

\bibitem{Tao1998}
T. Tao.
{\em The weak-type endpoint Bochner-Riesz conjecture and related topics},
Indiana Univ. Math. J. 47 (1998), no. 3, 1097--1124. 

\bibitem{Tao2002}
T. Tao.
{\em On the maximal Bochner-Riesz conjecture in the plane for $p<2$},
Trans. Amer. Math. Soc. 354 (2002), no. 5, 1947--1959. 

\bibitem{Triebel2006}
H. Triebel.
{\em Theory of Function Spaces. {III}}, Volume 100 Monographs in
  Mathematics, 2006.


\end{thebibliography}

 \end{document}